\theoremstyle{definition}
\newtheorem{theorem}{Theorem}
\newtheorem{lemma}[theorem]{Lemma}
\newtheorem{definition}[theorem]{Definition}
\newtheorem{notation}[theorem]{Notation}
\newtheorem{remark}[theorem]{Remark}
\date{}
\newcommand{\numberset}{\mathbb}
\newcommand{\F}{\numberset{F}}
\newcommand{\Pro}{\numberset{P}}
\newcommand{\Ol}{\mathcal{O}}
\newcommand{\mC}{\mathcal{C}}
\begin{document}

\title[]{A zero-dimensional approach to Hermitian codes}

\author{Edoardo Ballico$^1$}
\address{Department of Mathematics, University of Trento\\Via Sommarive 14,
38123 Povo (TN), Italy}
\email{$^1$edoardo.ballico@unitn.it}

\author{Alberto Ravagnani$^2$ $^*$}
\address{Department of Mathematics, University of Neuch\^{a}tel\\Rue
Emile-Argand 11, CH-2000 Neuch\^{a}tel, Switzerland}
\email{$^2$alberto.ravagnani@unine.ch}

\thanks{$^1$Partially supported by MIUR and GNSAGA}
\thanks{$^*$Corresponding author}
\subjclass[2010]{94B27; 14C20; 11G20}
\keywords{Hermitian curve; dual code; Goppa code; minimum distance;
minimum-weight codeword}

\maketitle

\providecommand{\bysame}{\leavevmode\hbox to3em{\hrulefill}\thinspace}

\begin{abstract}

We study the algebraic geometry of a family of evaluation codes from plane
smooth 
curves defined over any field. In particular, we provide a
cohomological
characterization of their dual minimum distance. 
After having discussed some general results on zero-dimensional subschemes
of
the plane,  we focus
on the interesting case of Hermitian $s$-point codes, describing the geometry of
their dual minimum-weight codewords.
\end{abstract}

\section{Introduction}\label{S1}
Let $\F$ be any finite field and let $n \ge 1$ be an integer. A \textbf{linear
code} of length $n$ and
dimension $k$ over $\F$ is a $k$-dimensional vector subspace $\mathcal{C}
\subseteq \F^n$.
 The elements of $\mathcal{C}$ are called \textbf{codewords}. For any
 $v,w \in \mC$ define the \textbf{distance} between $v$ and $w$ by
$d(v,w):=|\{ 1 \le i \le n : v_i \neq w_i\}|$. The \textbf{weight} of a codeword
$v \in \mC$ is defined
as $\mbox{wt}(v):=d(v,0)$. The \textbf{minimum distance} of a code $\mC
\subseteq \F^n$
of at least two elements is the positive integer
$$d(\mC):=\min_{v \in \mC \setminus \{ 0 \}} \mbox{wt}(v)=\min_{v \neq w \in
\mC} d(v,w).$$ A code of minimum distance $d$ corrects
 $\lfloor (d-1)/2 \rfloor$ errors: the higher is the
minimum distance, the
higher is the correction capability. Define the component-wise product in
$\F^n$ by $v\cdot w:=\sum_{i=1}^n v_iw_i$.
The \textbf{dual code} of $\mC$ is the $(n-k)$-dimensional code
 $\mC^\perp:=\{ u \in \F^n : u \cdot v=0 \mbox{ for any } v \in \mC\}$.

\begin{definition}\label{siso}
 We say that codes $\mC, \mathcal{D} \subseteq \F^n$ are \textbf{strongly
isometric} if the codewords of $\mC$ are obtained multiplying component-wise
 the codewords of $\mathcal{D}$  by a vector of $\F^n$ whose
components are all
nonzero.
\end{definition}

\begin{remark}
The strong isometry is an equivalence relation on the set of codes in $\F^n$.
Strongly isometric codes have the same dimension and 
the same minimum distance. Moreover, a strong isometry preserves the support of
the codewords and, in particular, the number of minimum-weight codewords of a
code. Codes $\mC$ and
$\mathcal{D}$ are strongly isometric if and only if their dual codes $\mC^\perp$
and $\mathcal{D}^\perp$ are strongly isometric. 
\end{remark}

 Let $\Pro^r$ be the projective $r$-dimensional space over
 $\F$, and let $C \subseteq \Pro^r$ be a connected smooth curve
defined\footnote{The
curve $C$ could be not defined over $\F$, but only over the algebraic closure
$\overline{\F}$ of $\F$.} over $\F$. Assume that $C$ is a
complete intersection. Choose any subset $B \subseteq C(\F)$ of $\F$-rational
points of $C$ and 
 an integer $d>0$. Finally, consider the linear map
$$\mbox{ev}: H^0(C,\Ol_C(d)) \to \F^{|B|}$$ ($|B|$ denotes the cardinality
of $B$) which evaluates a degree $d$ homogeneous form on $C$ at the points
appearing in $B$. Being a vector subspace of $\F^{|B|}$, the image of
$\mbox{ev}$, say $\mathcal{C}$, is a linear code of
length $|B|$ over the finite field $\F$.

Recently, A. Couvreur showed in \cite{c} that a lower bound on the minimum
distance
 of $\mathcal{C}^\perp$ can be expressed in terms of $d$ and the projective
geometry of $B$
 (for instance, the existence in $B$ of $d+2$ collinear points).
 Codes arising from geometric constructions are known to have good parameters
for
 applications and a wide literature on the topic is available
 (see in particular \cite{Ste} and \cite{Sti}).

In this paper we focus on the case  $r=2$ of the described approach (i.e., on
the
case of plane smooth curves) and provide
an improvement of Couvreur's method in this specific context. More precisely,
we introduce zero-dimensional schemes in the setup, and study codes obtained
evaluating vector spaces of the more general form $H^0(C,\Ol_C(d)(-E))$,
where $E \subseteq \Pro^2$ is a zero-dimensional scheme  whose support avoids
the set $B$ in the notation above. This class of codes includes many classical
Goppa codes (see Remark \ref{goppa} at
page \pageref{goppa}). Then we apply the results
for arbitrary curves
to the special case of codes from the Hermitian curve, providing a geometric
characterization of the dual minimum-weight codewords of many Hermitian
$s$-point codes
(see \cite{Ste}, Chapter 10 for the definitions).

 \subsection{Layout of the paper}

The paper is organised in three main parts.
In Section \ref{general} we characterize the dual minimum distance
of codes
arising from smooth plane curves, and establish a key lemma 
 to control zero-dimensional plane schemes 
from a cohomological point of view. In Section \ref{hhh} we describe some
geometric properties of Hermitian $s$-point codes. In Section \ref{hhh2} we
prove our main results on
Hermitian $s$-point codes.

\subsection{Main references}
One-point codes from the Hermitian curve
are well-studied, and efficient methods to decode them are known
(\cite{Sti},\cite{yk} and \cite{yks}).
The minimum distance of Hermitian two-point codes has been first determined by
M. Homma and S. J. Kim
 (\cite{hk1}, \cite{hk2}, \cite{hk3}, \cite{hk4}) and more recently S. Park gave
explicit formulas
for the dual minimum distance of such codes (see \cite{p}) using different
techniques.
The second and the
third Hamming weight of one-point codes on the Hermitian curve are studied in
\cite{m}, \cite{yks} and
\cite{mr}. The second
Hamming weight of Hermitian two-point codes is treated in \cite{hk5}.

\section{Results on arbitrary plane smooth curves} \label{general}
In this section we study the algebraic geometry of 
codes obtained 
evaluating vector spaces of homogeneous forms at some prescribed points of a
plane smooth
curve.
The general results of this section will be applied to
interesting examples
in later sections.

\begin{remark}
 The definition of linear code (see Section \ref{S1}) can be given over any
field $\F$ (and not only over finite fields). The minimum distance is defined in
the same way. We use this extended definition in this section.
\end{remark}

\begin{remark}\label{a000.000}
Fix any field $\F$, a projective scheme $T$ and a coherent sheaf $\mathcal{F}$
on $T$ defined over $\F$.
Let $\overline{\F}$ be the algebraic closure of  $\F$.
 Consider the scheme $T_{\overline{\F}}$
and the coherent sheaf $\mathcal {F}_{\overline{\F}}$ obtained by the extension
 of scalars $\overline{\F} \supseteq \F$. 
 The definition and main properties of cohomology groups
can be found in \cite{ha}, Chapter II and III, for arbitrary
schemes and over an arbitrary field.
Since every extension of fields $\overline{\F} \supseteq \F$ is flat,
for each integer $i\ge 0$ the $\F$-vector space $H^i(T,\mathcal {F})$ and the
$\overline{\F}$-vector
space $H^i(T_{\overline{\F}},\mathcal {F}_{\overline{\F}})$ have the same
dimension (see \cite{ha}, Proposition III.9.3).
\end{remark}

\begin{lemma}\label{e1}
Let $\F$ be any field and let $\Pro^2$ denote the projective plane on $\F$.
Let $C \subseteq \Pro^2$ be a smooth plane curve. Fix an integer $d>0$, a
zero-dimensional
scheme $E\subseteq C$ and a finite
subset $B\subseteq C$ such that $B\cap E_{red}=\emptyset$\footnote{Here
$E_{red}$ denotes the reduction of the scheme $E$.}. Denote by $\mathcal{C}$ the
code 
obtained evaluating the vector space $H^0(C,\Ol_C(d)(-E))$ at the points of $B$.
Set $c:= \deg (C)$,
$n:=|B|$ and assume $|B| > dc -\deg (E)$. The following facts hold.
\begin{enumerate}
 \item The dimension of $H^0(C,\mathcal{O}_c(d))$ is given by the formulas
$$h^0(C,\mathcal {O}_C(d)) = \left\{ \begin{array}{ll} \binom{d+2}{2} & \mbox{
if } d<c, \\
                                  \binom{d+2}{2} -\binom{d-c+2}{2} & \mbox{ if }
d \ge c. \end{array} \right.\ $$
\item The code $\mathcal {C}$ has length $n$ and dimension $k:= h^0(C,\mathcal
{O}_C(d))-\deg (E) +h^1(\mathbb {P}^2,\mathcal {I}_E(d))$.

\item The minimum distance
of $\mathcal {C}^{\bot}$ is the minimal cardinality, say $z$, of a subset of $S
\subseteq B$
such that
$$h^1(\mathbb {P}^2,\mathcal {I}_{S\cup E}(d)) >h^1(\mathbb {P}^2,\mathcal
{I}_E(d)).$$

\item A codeword of $\mathcal{C}^{\bot}$
has weight $z$ if and only if it is supported by a subset $S\subseteq B$ such
that
\begin{enumerate}
 \item $|S| = z$,
\item $h^1(\mathbb {P}^2,\mathcal {I}_{E\cup S}(d)) >h^1(\mathbb
{P}^2,\mathcal {I}_E(d))$,
\item $h^1(\mathbb {P}^2,\mathcal {I}_{E\cup S}(d))
>h^1(\mathbb {P}^2,\mathcal {I}_{E\cup S'}(d))$ for any $S'\varsubsetneq S$.
\end{enumerate}
\end{enumerate}
\end{lemma}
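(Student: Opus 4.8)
The plan is to connect the algebraic/coding-theoretic objects (codewords, weights, supports) with the cohomological invariants $h^1(\mathbb{P}^2, \mathcal{I}_{E\cup S}(d))$ via the evaluation map and a careful analysis of exact sequences. Throughout, by Remark \ref{a000.000} I may pass freely to $\overline{\F}$ when computing cohomology dimensions, since extension of scalars preserves the relevant $h^i$. The starting point is the ideal-sheaf sequence on $\mathbb{P}^2$ for a zero-dimensional scheme $Z$, namely $0 \to \mathcal{I}_Z(d) \to \mathcal{O}_{\mathbb{P}^2}(d) \to \mathcal{O}_Z(d) \to 0$, together with the sequence relating $C$ and its twists. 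The key observation is that $H^0(C,\mathcal{O}_C(d)(-E))$ consists of the degree-$d$ forms vanishing on $E$ (restricted to $C$), and that the numerical hypothesis $|B| > dc - \deg(E)$ guarantees that the evaluation map behaves well with respect to dimensions.

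First I would prove (1), which is the purely dimension-theoretic part: from $0 \to \mathcal{O}_{\mathbb{P}^2}(d-c) \to \mathcal{O}_{\mathbb{P}^2}(d) \to \mathcal{O}_C(d) \to 0$ (using that $C$ is a plane curve of degree $c$, hence defined by one form), I take cohomology and use $h^0(\mathbb{P}^2,\mathcal{O}_{\mathbb{P}^2}(d)) = \binom{d+2}{2}$ for $d\ge 0$ and $h^1(\mathbb{P}^2,\mathcal{O}_{\mathbb{P}^2}(d-c))=0$, yielding the two cases according as $d<c$ or $d\ge c$. Next, for (2), I would compute $\dim \mathcal{C}$ as the rank of the evaluation map $\mbox{ev}$. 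The kernel consists of sections of $H^0(C,\mathcal{O}_C(d)(-E))$ vanishing on all of $B$; the hypothesis $|B| > dc-\deg(E) = \deg(\mathcal{O}_C(d)(-E))$ forces such a section to vanish on a scheme of degree exceeding that of the line bundle, hence to be zero, so $\mbox{ev}$ is injective. Thus $k = h^0(C,\mathcal{O}_C(d)(-E))$, and I would rewrite this quantity using the sequence $0 \to \mathcal{I}_{E,\mathbb{P}^2}(d) \to \mathcal{O}_{\mathbb{P}^2}(d) \to \mathcal{O}_{E}(d)\oplus(\text{restriction to }C) \to \dots$ more carefully via $0 \to \mathcal{I}_E(d) \to \mathcal{O}_{\mathbb{P}^2}(d) \to \mathcal{O}_E(d) \to 0$, giving $h^0(\mathbb{P}^2,\mathcal{I}_E(d)) = \binom{d+2}{2} - \deg(E) + h^1(\mathbb{P}^2,\mathcal{I}_E(d))$, and then relating $h^0(\mathbb{P}^2,\mathcal{I}_E(d))$ to $h^0(C,\mathcal{O}_C(d)(-E))$ to obtain the stated formula for $k$.

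The heart of the argument is (3) and (4). A codeword of $\mathcal{C}^\perp$ is a linear functional on $\mathcal{C}$, equivalently an element of $\F^B$ orthogonal to every evaluation vector. Its support is a subset $S\subseteq B$, and the condition that it annihilates $\mathcal{C}$ translates — via the duality between evaluation at points of $S$ and imposing vanishing conditions — into the statement that the points of $S$ fail to impose independent conditions on $H^0(\mathbb{P}^2,\mathcal{I}_E(d))$ beyond what $E$ already forces, i.e. $h^1(\mathbb{P}^2,\mathcal{I}_{E\cup S}(d)) > h^1(\mathbb{P}^2,\mathcal{I}_E(d))$. The plan is to make this precise: a nonzero functional supported on $S$ exists iff the restriction-to-$S$ map on $H^0(C,\mathcal{O}_C(d)(-E))$ drops rank, which by the ideal-sheaf sequence for $E\cup S$ is exactly the $h^1$-jump. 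The minimum distance is then the least $|S|$ achieving this jump, giving (3). For (4), minimality of the support under inclusion (condition (c)) corresponds precisely to the codeword having no smaller support, while (a) and (b) record that $|S|=z$ and that $S$ achieves the jump; I would show each codeword of weight $z$ is supported on such an $S$ and conversely that such an $S$ carries a weight-$z$ codeword, by tracking kernels of the restriction maps.

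\textbf{The main obstacle} I expect is in (4), specifically establishing condition (c): that the support of a minimum-weight codeword must be \emph{minimal} with respect to the $h^1$-jump. The delicate point is that the existence of a codeword supported exactly on $S$ (with no zero entries on $S$) requires not merely that $h^1$ jumps for $S$, but that the jump is \emph{witnessed by a functional nonvanishing at every point of $S$}; if $h^1$ already jumped for some proper $S'\subsetneq S$, the associated functional would have strictly smaller support, contradicting that $S$ is the true support. Converting the cohomological jump into a functional with full support on $S$ — and conversely reading off the exact support from a codeword — is where the linear-algebra bookkeeping between $H^1(\mathbb{P}^2,\mathcal{I}_{E\cup S}(d))$ and the cokernel of the evaluation/restriction map demands the most care.
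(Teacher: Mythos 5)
Your proposal is correct and follows essentially the same route as the paper: the same exact sequences (the ideal-sheaf sequence for $E$ and the sequence $0 \to \mathcal{O}_{\mathbb{P}^2}(d-c) \to \mathcal{O}_{\mathbb{P}^2}(d) \to \mathcal{O}_C(d) \to 0$), the same degree argument for injectivity of the evaluation map, the same translation of dual codewords supported on $S$ into the failure of $S$ to impose independent conditions (hence the $h^1$-jump), and the same resolution of the full-support subtlety in (4) via minimality of $z$ (the paper phrases it as a rank condition on submatrices of the generator matrix, which is the same statement). The one step you should make explicit is the one the paper emphasizes: the surjectivity of the restriction map $H^0(\mathbb{P}^2,\mathcal{I}_E(d)) \to H^0(C,\mathcal{O}_C(d)(-E))$ (projective normality of smooth plane curves), which your ``key observation'' silently assumes and which follows at once from the $h^1$-vanishing on $\mathbb{P}^2$ already used in your proof of part (1).
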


\begin{proof}
We will make implicit use of Remark \ref{a000.000} throughout the proof. The
computation of $h^0(C,\mathcal{O}_C(d))$ is well-known.
The technical condition $|B| > dc -\deg (E)$ assures that $h^0(C,\mathcal
{O}_C(d)(-E-B)) =0$. Hence
$\mathcal {C}$ has length $n$ and in fact dimension $k$.
 In the case $E=\emptyset$ the computation of
the minimum distance of $\mathcal {C}^\bot$ is just the planar case of \cite{c},
Proposition 3.1. In the general case notice that $\mathcal {C}$ is obtained
evaluating
a family of homogeneous degree $d$ polynomials on the curve $C$ (the ones
vanishing on the scheme
$E$) at the points of $B$.
Since $C$ is projectively normal (it is a plane smooth curve), the restriction
map
$\rho _d: H^0(\mathbb {P}^2,\mathcal {O}_{\mathbb {P}^2}(d)) \to H^0(C,\mathcal
{O}_C(d))$ is surjective.
Hence the restriction map $\rho _{d,E}: H^0(\mathbb {P}^2,\mathcal {I}_E(d)) \to
H^0(C,\mathcal {O}_C(d)(-E))$ is surjective. Hence a finite subset $S\subseteq
C \setminus E_{red}$
imposes independent condition to $H^0(C,\mathcal {O}_C(d)(-E))$ if and only
if $S$ imposes independent conditions to $H^0(\mathbb {P}^2,\mathcal {I}_E(d))$.
Moreover,
$S$ imposes independent conditions to $H^0(\mathbb {P}^2,\mathcal {I}_E(d))$
if and only if $h^1(\mathbb {P}^2,\mathcal {I}_{E\cup S}(d)) =h^1(\mathbb
{P}^2,\mathcal {I}_E(d))$ (here we use again that $S\cap E_{red}=\emptyset$).
 To get the existence of a non-zero
codeword of $\mC^\perp$ whose support is $S$ (and not only with support
contained in $S$) we need that
the submatrix $M_S$ of the generator matrix of $\mathcal {C}$ obtained by
considering the coloumns associated to the points appearing in $S$ has
the property
that each of its submatrices obtained deleting one coloumn have the same rank of
$M_S$ (each
such coloumn
is associated to some $P\in S$ and we require that the codeword has support
containing  $P$). This is equivalent to the last claim in the statement. 
\end{proof}

\begin{remark}
 Notice that we may drop the assumption $$h^1(\mathbb {P}^2,\mathcal {I}_{E\cup
S}(d)) >h^1(\mathbb {P}^2,\mathcal {I}_{E\cup S'}(d)) \mbox{ for any }
S'\varsubsetneq
S$$ in the statement of Lemma
\ref{e1} if there is no subset $A\subseteq B$ with the properties
$|A|<z$ and $h^1(\mathbb
{P}^2,\mathcal {I}_{E\cup A}(d)) >h^1(\mathbb {P}^2,\mathcal {I}_E(d))$. This is
the case when $z$ is smaller or equal then the Hamming distance of
$\mathcal{C}^{\bot}$.
\end{remark}

\begin{remark}\label{e2}
Take the set-up of the proof of Lemma \ref{e1}. Since both the restriction maps
$\rho
_d$ and
$\rho _{d,E}$ are surjective,  $h^1(\mathbb {P}^2,\mathcal
{I}_{E\cup S}(d)) >h^1(\mathbb {P}^2,\mathcal {I}_E(d))$ is equivalent to 
$h^0(C,\mathcal
{O}_C(d)(-(E\cup S)) > h^0(C,\mathcal {O}_C(d)(-E))-|S|$ or, equivalently
(Riemann-Roch theorem), to
$h^1(C,\mathcal {O}_C(d)(-(E\cup S))
> h^1(C,\mathcal {O}_C(d)(-E))$. In the applications we will usually have $d\le
\deg (C)-3$ and so
$h^1(C,\mathcal {O}_C(d)) >0$.
\end{remark}

\begin{notation}\label{resid}
Let $\F$ be any field and $\Pro^2$ the projective plane over $\F$. Let
$Z\subseteq \mathbb {P}^2$ be any zero-dimensional scheme. Fix a curve
$T\subseteq \mathbb {P}^2$ and set $t:= \deg (T)$ (here we do not assume that
$T$ is
reduced, it may even have multiple components). The residual scheme
$\mbox{Res}_T(Z)$
of $Z$ with respect to the divisor $T$ is defined to be
the closed subscheme of $\mathbb {P}^2$ with $\mathcal {I}_Z:\mathcal {I}_T$ as
its ideal sheaf.
From the general theory of ideal sheaves we have $\mbox{Res}_T(Z) \subseteq T$
and $\deg (Z) = \deg (T\cap Z) +
\mbox{Res}_T(Z)$. If $Z$ is reduced, i.e. if $Z$ is a finite set, then
$\mbox{Res}_T(Z) =
Z\setminus Z\cap T$.
\end{notation}

The following Lemma \ref{u00.01A} a key point in the improvement
of the method of
\cite{c} in the case of plane curves. More precisely, we provide a
cohomological control
over the zero-dimensional scheme $E$ introduced in Section \ref{S1}. The lemma
is in fact
a schematic version of \cite{ep}, Corollaire 2. Parts (a) and (b) also follow in
an arbitrary
projective space from \cite{bgi}, Lemma 34. Parts (b), (c) and  part (d)
are just \cite{ep}, Remarques at page 116.

\begin{lemma}\label{u00.01A}
Let $\F$ be an algebrically closed field and $\Pro^2$ the projective plane over
it. Choose an integer $d>0$ and a zero dimensional scheme 
$Z\subseteq \mathbb {P}^2$. Set $z:= \deg (Z)$.
The following facts hold.
\begin{itemize}
\item[(a)] If $z\le d+1$, then $h^1(\mathbb {P}^2,\mathcal {I}_Z(d))=0$.

\item[(b)] If $d+2 \le z\le 2d+1$, then $h^1(\mathbb {P}^2,\mathcal {I}_Z(d))>0$
if and only if there
exists a line $T_1$ such that $\deg (T_1\cap Z)\ge d+2$.

\item[(c)] If $2d+2\le z \le 3d-1$ and $d\ge 2$, then $h^1(\mathbb
{P}^2,\mathcal {I}_Z(d))>0$ if and only if
either there
exists a line $T_1$ such that $\deg (T_1\cap Z)\ge d+2$, or there exists a conic
$T_2$ such that $\deg (T_2\cap Z) \ge 2d+2$.

\item[(d)] Assume $z=3d$ and $d\ge 3$. Then $h^1(\mathbb {P}^2,\mathcal
{I}_Z(d))>0$ if and only if
either there
exists a line $T_1$ such that $\deg (T_1\cap Z)\ge d+2$, or there exists a conic
$\F$  such that $\deg (T_2\cap Z) \ge 2d+2$, or there exists a plane cubic
$T_3$ such that $Z$ is the complete intersection of $T_3$ and a plane curve of
degree
$d$. In the latter case, if $d\ge 4$ then $T_3$ is unique and we may
find a plane curve $C_d$ with $Z = T_3\cap C_d$.

\item[(e)] Assume $z \le 4d-5$ and $d\ge 4$.  Then $h^1(\mathbb {P}^2,\mathcal
{I}_Z(d))>0$ if and only if
either there
exists a line $T_1$  such that $\deg (T_1\cap Z)\ge d+2$, or there exists a
conic $T_2$  such that $\deg (T_2\cap Z) \ge 2d+2$, or there exist a subscheme
$W\subseteq Z$  with $\deg (W)=3d$ and plane cubic
$T_3$  such that $W$ is  the complete intersection of $T_3$ and a plane curve of
degree
$d$, or there is a plane cubic $C_3$  such that $\deg (C_3\cap Z)\ge 3d+1$.
\end{itemize}
\end{lemma}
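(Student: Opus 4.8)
The statement splits into an elementary ``if'' direction and a more delicate ``only if'' direction; parts (a)--(d) will serve as the base of an induction on $d$.

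For the ``if'' direction I would first record the monotonicity principle: whenever $W\subseteq Z$ the surjection $\mathcal{O}_Z\twoheadrightarrow\mathcal{O}_W$ yields $h^1(\Pro^2,\mathcal{I}_W(d))\le h^1(\Pro^2,\mathcal{I}_Z(d))$, so it suffices to exhibit, for each of the four conditions, a subscheme $W\subseteq Z$ with $h^1(\Pro^2,\mathcal{I}_W(d))>0$. For conditions (1), (2) and the last alternative I take $W=T_k\cap Z$ lying on a plane curve $T_k$ of degree $k\in\{1,2,3\}$, where $h^0(T_k,\mathcal{O}_{T_k}(d))=kd+1-p_a(T_k)$ equals $d+1,\ 2d+1,\ 3d$ respectively; the hypothesis $\deg W>h^0(T_k,\mathcal{O}_{T_k}(d))$ forces $h^1(T_k,\mathcal{I}_{W,T_k}(d))>0$, and the restriction sequence
$$0\to\mathcal{O}_{\Pro^2}(d-k)\to\mathcal{I}_W(d)\to\mathcal{I}_{W,T_k}(d)\to0$$
(valid since $W\subseteq T_k$) together with $H^1(\Pro^2,\mathcal{O}(d-k))=H^2(\Pro^2,\mathcal{O}(d-k))=0$ for $d\ge4$, $k\le3$, transports this to $h^1(\Pro^2,\mathcal{I}_W(d))>0$. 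For the complete-intersection alternative $W=T_3\cap C_d$ the trace on $T_3$ is $\mathcal{I}_{W,T_3}(d)\cong\mathcal{O}_{T_3}$, whence $h^1(T_3,\mathcal{O}_{T_3})=p_a(T_3)=1>0$ (this is the Cayley--Bacharach mechanism); the same restriction sequence concludes.

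For the ``only if'' direction I would argue by induction on $d$, using parts (a)--(d) as the base and the residual sequence of Notation \ref{resid} with respect to a line $L$,
$$0\to\mathcal{I}_{\mbox{Res}_L(Z)}(d-1)\to\mathcal{I}_Z(d)\to\mathcal{I}_{Z\cap L,L}(d)\to0,$$
as the inductive engine. On $L\cong\Pro^1$ one has $h^1(L,\mathcal{I}_{Z\cap L,L}(d))>0$ exactly when $\deg(Z\cap L)\ge d+2$; whenever this fails, the sequence gives $h^1(\Pro^2,\mathcal{I}_{\mbox{Res}_L(Z)}(d-1))\ge h^1(\Pro^2,\mathcal{I}_Z(d))>0$. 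Let $a=\max_L\deg(Z\cap L)$. If $a\ge d+2$ we are in case (1). Otherwise I residuate a line $L$ realising $a$: then $Z_1=\mbox{Res}_L(Z)$ satisfies $h^1(\Pro^2,\mathcal{I}_{Z_1}(d-1))>0$ and $\deg Z_1=z-a$, which for $a$ not too small falls into the range of parts (c)--(d) or of (e) at level $d-1$. The induction returns a line, conic or cubic obstruction for $Z_1$, which I recombine with $L$ through the additivity $\deg\bigl(Z\cap(L\cup T')\bigr)=\deg(Z\cap L)+\deg(Z_1\cap T')$ to build the conic of (2) or the cubic of (3)/(4) for $Z$. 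A cubic meeting $Z$ in exactly $3d$ points is promoted to the complete-intersection form of case (3): the exact-degree secancy forces $Z\cap T_3$ to fail independent conditions on $\mathcal{O}_{T_3}(d)$, hence to be cut on $T_3$ by some degree-$d$ curve $C_d$, and $\deg(C_d\cap T_3)=3d=\deg(Z\cap T_3)$ gives $Z\cap T_3=C_d\cap T_3$; the hypothesis $d\ge4$ guarantees the uniqueness of $T_3$ recorded in part (d). Finally, the ceiling $z\le 4d-5$ is what keeps every obstruction of degree $\le3$: a quartic obstruction would require $\deg(Z\cap T_4)\ge 4d-1>z$, which is impossible, so no case beyond (1)--(4) can arise.

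The main obstacle is precisely the recombination step when $a$ is small, i.e.\ when no line is very secant to $Z$ and yet $h^1(\mathcal{I}_Z(d))>0$. In that regime a single line-residuation does not rebuild a curve of the required secancy, and one must either choose the curve to residuate by more cleverly or---more robustly---track the full \emph{character} of $Z$ in the sense of \cite{ep}, which simultaneously controls all the maximal secancies $\deg(Z\cap T)$ and pins down where the positive $h^1$ is concentrated. Handling this low-secancy case uniformly, together with verifying the complete-intersection structure and the uniqueness of the cubic, is where the real work lies; the elementary $h^0$-counts and the residual sequence are otherwise routine.
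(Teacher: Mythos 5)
Your ``if'' direction is correct and is essentially the paper's own argument: monotonicity of $h^1$ under passing to subschemes, restriction to a curve $T_k$ of degree $k\le 3$ with $h^0(T_k,\mathcal{O}_{T_k}(d))$ equal to $d+1$, $2d+1$, $3d$, and the Cayley--Bacharach computation $h^1(T_3,\mathcal{O}_{T_3})=p_a(T_3)=1$ for the complete-intersection alternative. That half is indeed routine.

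The ``only if'' direction is the entire content of the lemma, and your induction does not prove it; the failure of the recombination step is structural, not a detail to be patched. Concretely, let $d\ge 6$ and let $Z$ consist of $z=3d+1\le 4d-5$ points on an irreducible plane cubic $C_3$. Then $h^1(\Pro^2,\mathcal{I}_Z(d))>0$ by your own ``if'' part, yet by Bezout every line meets $Z$ in degree at most $3$, so $a\le 3$. First, with $a\le 3$ the residual $Z_1=\mbox{Res}_L(Z)$ need not satisfy $\deg Z_1\le 4(d-1)-5$, so the inductive hypothesis at level $d-1$ may not even apply; second, even when it does, the best it can return here is a cubic $T_3'$ with $\deg (T_3'\cap Z_1)\ge 3(d-1)+1=3d-2$, and the only transfer available to you is $\deg (T_3'\cap Z)\ge \deg(T_3'\cap Z_1)\ge 3d-2$, which falls short of the required $3d+1$, while the recombined curve $L\cup T_3'$ has degree $4$ and is outside the allowed list. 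Each line-residuation loses exactly the ground it would need to regain, so the induction never closes. (Your ``promotion'' of a $3d$-secant cubic to a complete intersection has the same defect: it presupposes $h^1(\Pro^2,\mathcal{I}_{Z\cap T_3}(d))>0$, which mere secancy $3d$ does not give.) Your final remark---that one should instead track the character of $Z$ in the sense of \cite{ep}---names precisely the missing ingredient, but naming it is not supplying it; that is where the whole difficulty sits. The paper's proof is a direct application of that machinery: parts (a) and (b) are quoted from \cite{bgi}, Lemma 34; for the rest one sets $\tau:=\max\{t: h^1(\Pro^2,\mathcal{I}_Z(t))>0\}\ge d$, checks the numerical hypothesis $\tau\ge s-3+z/s$ for $s=1,3,4$ in the respective ranges of $z$, and invokes \cite{ep}, Corollaire 2, whose dichotomy---either $Z$ is a complete intersection of curves of degrees $s$ and $\tau$, or some $W\subseteq Z$ lies on a curve of degree $t\le\tau-1$ with $\deg (W)\ge t(\tau-t+3)$---immediately yields the stated alternatives, the bound $z\le 4d-5$ excluding $t\ge 4$ because then $\deg (W)\ge 4(d-1)>z$. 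Without this theorem (or an equivalent uniform-position statement) your argument establishes only the easy half.
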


\begin{proof}
Since $Z$ is a zero-dimensional scheme, for every $d\in \mathbb {Z}$ and any
closed
subscheme
$W\subseteq Z$ we get $h^1(Z,\mathcal {I}_{W,Z}(d))=0$. Hence
the restriction map $H^0(Z,\mathcal {O}_Z(d)) \to H^0(W,\mathcal {O}_W(d))$ is
surjective.
As a consequence, if $h^1(\mathbb {P}^2,\mathcal {I}_W(d)) >0$, then
$h^1(\mathbb {P}^2,\mathcal {I}_Z(d))>0$. Take any integer $y \in \{1,\dots,
d-1\}$ and any degree $y$ plane curve $D_y$ (we allow $D_y$ to have even
multiple components). Set $W:= D_y\cap Z$. From the exact sequence
\begin{equation}\label{eq0u.01}
0 \to \mathcal {O}_{\mathbb {P}^2}(d-y) \to \mathcal {O}_{\mathbb {P}^2}(d) \to
\mathcal {O}_{D_y}(d)\to 0
\end{equation}
we get $h^0(D_y,\mathcal {O}_{D_y}(d)) = \binom{d+2}{2} -\binom{d-y+2}{2}$ and
that
the restriction map $\rho :H^0(\mathbb {P}^2,\mathcal {O}_{\mathbb {P}^2}(d))
\to H^0(D_y,\mathcal {O}_{D_y}(d))$ is surjective. Hence if $h^0(D_y,\mathcal
{I}_W(d))
> \binom{d+2}{2} -\binom{d-y+2}{2} -\deg (W)$, then $h^1(\mathbb {P}^2,\mathcal
{I}_W(d))>0$
and hence $h^1(\mathbb {P}^2,\mathcal {I}_Z(d))>0$. Since $h^0(D_y,\mathcal
{I}_W(d))\ge 0$,
we will have $h^1(\mathbb {P}^2,\mathcal {I}_W(d))>0$ if $\deg (W)>
\binom{d+2}{2}
-\binom{d-y+2}{2}$.
For $y=1,2$ it is sufficient to assume $\deg (W) \ge yd +2$. For $y=3$ it is
sufficient
to assume $\deg (W) \ge 3d+1$. Now take $y=3$ and $\deg (W)=3d$.
We have $h^0(D_3,\mathcal {I}_W(d))
> \binom{d+2}{2} -\binom{d-3+2}{2} -\deg (W)$ if and only if $h^0(D_3,\mathcal
{I}_W(d)) >0$, i.e.
(by the surjectivity of $\rho$) if and only if there exists a degree $d$ plane
curve
$C_3$ such that
$W = D_3\cap C_3$. Hence in parts (b), (c), (d) and (e) we proved the ``if''
part. In the remaining part of the proof we check part (a) and the ``only if''
part of (b), (c), (d) and (e).
Let $\tau$ be the maximal integer such that $h^1(\mathbb {P}^2,\mathcal
{I}_Z(\tau))>0$ (such a $\tau$ of course exists,
because $h^1(\mathbb {P}^2,\mathcal {I}_Z(t))=0$ for $t\gg 0$, by a famous
theorem of
Serre). By assumption we have $\tau \ge d$. Fix a positive integer
$s\in \{1,2,3,4\}$ and assume $\tau \ge s-3+z/s$. By \cite{ep}, Corollaire 2,
either
$\tau = s-3 +z/s$ and $Z$ is the complete intersection of a degree $s$ plane
curve and a degree $\tau$ curve, or there are $W\subseteq Z$ and an integer
$t\in
\{1,\dots ,\tau -1\}$ such
that $\deg (W) \ge t(\tau -t+3)$ and $W$ is contained in a plane curve of degree
$t$.
\begin{itemize}
 \item[(i)] Parts (a) and (b) are just the plane case of \cite{bgi}, Lemma 34.
\item[(ii)] Now assume $d+2 \le z \le 2d+1$. Since $\tau \ge d$, we have $\tau
\ge d-2$. Take $s=1$. Since $\tau \ge d$, we have $\tau \ge 1+3+z$. Hence we may
apply \cite{ep}, Corollaire 2, and get the existence of $W\subseteq Z$ and a
line $T_1$ such that $\deg (W) \ge
d+2$ and $W \subseteq T_1$.
\item[(iii)] Now assume $2d+2 \le z \le 3d$. Since $\tau \ge d \ge 3-3+z/3$, we
can apply \cite{ep}, Corollaire 2, with the integer $s:= 3$ and get parts (c)
and (d).
\item[(iv)] Finally assume $3d+1 \le z \le 4d-5$. Since $\tau \ge d > 4-3+z/4$,
by \cite{ep}, Corollaire 2, we get part (e). 
\end{itemize}
\end{proof}

In the next section we  study the algebraic geometry of
codes
arising from a Hermitian curve $X$ defined over a 
finite field $\F_{q^2}$ (briefly, of \textbf{Hermitian} codes). In particular,
we will employ the well-known characterization of the tangent lines to $X$ in
order to reveal a
precise cohomological structure in the dual minimum-weight codewords of such 
codes.

\section{The Hermitian curve} \label{hhh}

In this section $q$ denotes a prime power and $\F_{q^2}$ is the finite field
with $q^2$ elements. We denote by $\Pro^2$  
the projective plane over the field $\F_{q^2}$ of projective coordinates
$(x:y:z)$. The \textbf{Hermitian curve} $X \subseteq \Pro^2$ is defined to be
the zero locus of the polynomial $y+y^q=x^{q+1}$ (as an
affine equation). This
curve is clearly defined over $\F_{q^2}$. Its function field is studied in
\cite{Sti}, Example 6.3.6. From a geometric point of view, $X$ is
known to be a smooth curve of genus $g(X)=q(q-1)/2$ given by the
genus-degree
formula. The projective geometry of tangent lines to $X$ is completely known and
summarized in the following two results.

\begin{lemma}
\label{intrette}
Let $X$ be the Hermitian curve. Every line $L$ of $\Pro^2$ either intersects $X$
in $q+1$ distinct $\F_{q^2}$-rational 
points, or $L$ is tangent to $X$ at a point $P$ (with contact order $q+1$). In
the latter case $L$ does not intersect $X$ in any other 
point different from $P$.
\end{lemma}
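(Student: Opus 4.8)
The plan is to reduce the statement to an explicit analysis of the single-variable polynomial obtained by restricting the defining equation of $X$ to a line, exploiting throughout that the lines in question are defined over $\F_{q^2}$. First I would homogenize the affine equation $y+y^q = x^{q+1}$ to $x^{q+1} = yz^q + y^qz$, so that $X$ is a plane curve of degree $q+1$; by B\'ezout every $\F_{q^2}$-rational line $L$ then meets $X$ in a zero-dimensional scheme of length $q+1$ over $\cfq$. The whole content of the lemma is a description of the support and the multiplicities of this scheme, together with the rationality of its points. I would treat separately the pencil of lines through the distinguished point $P_\infty = (0:1:0)$, which is the only point of $X$ on the line at infinity $z=0$.

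For the lines through $P_\infty$ the computation is immediate. Setting $z=0$ in the homogeneous equation forces $x=0$, so the line at infinity meets $X$ only at $P_\infty$, necessarily with multiplicity $q+1$; this is a tangency of contact order $q+1$ with no further rational point, exactly the second alternative. Every other line through $P_\infty$ is a vertical line $x=a$ with $a\in\F_{q^2}$; it crosses $X$ transversally at $P_\infty$ and meets the affine part where $y+y^q = a^{q+1}$. Since $y\mapsto y+y^q$ is the trace $\mathrm{Tr}_{\F_{q^2}/\F_q}$, which is $\F_q$-linear, surjective and separable, this equation has exactly $q$ distinct roots, all in $\F_{q^2}$; hence $L$ meets $X$ in $q+1$ distinct $\F_{q^2}$-rational points, the first alternative.

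The heart of the argument is the generic case of a line $L\colon y=\alpha x + \beta$ with $\alpha,\beta\in\F_{q^2}$, which does not pass through $P_\infty$, so that $L\cap X$ is entirely affine and cut out by $f(x) := x^{q+1} - \alpha^q x^q - \alpha x - \gamma$ with $\gamma = \beta+\beta^q$. Two observations drive the analysis. Differentiating gives $f'(x) = x^q - \alpha$, and because $\alpha\in\F_{q^2}$ one has $x^q - \alpha = (x-\alpha^q)^q$; thus the only possible repeated root of $f$ is $x_0 := \alpha^q$. Second, for $x\in\F_{q^2}$ I would rewrite $f(x) = \mathrm{N}_{\F_{q^2}/\F_q}(x-\alpha^q) - \delta$, where $\delta := \alpha^{q+1} + \beta + \beta^q \in \F_q$; this rests on the factorization $(x-\alpha^q)(x^q-\alpha) = x^{q+1} - \alpha^q x^q - \alpha x + \alpha^{q+1}$ and on $\overline{x-\alpha^q} = x^q - \alpha$ for $x\in\F_{q^2}$.

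The conclusion then follows from the structure of the norm map $\mathrm{N}\colon \F_{q^2}^\ast \to \F_q^\ast$, which is surjective with kernel of order $q+1$. If $\delta\neq 0$, the equation $\mathrm{N}(x-\alpha^q)=\delta$ has exactly $q+1$ solutions in $\F_{q^2}$, so $f$ has $q+1$ distinct $\F_{q^2}$-rational roots; since $\deg f = q+1$ these exhaust $L\cap X$ and the line is secant. If $\delta=0$, the same expansion yields the polynomial identity $f(x) = (x-x_0)^{q+1}$, so $L\cap X$ is supported at the single rational point $x_0=\alpha^q$ with contact order $q+1$, which is the tangent case. I expect the main obstacle to be the rationality claim rather than the multiplicity count: one must make sure that in the secant case the $q+1$ rational solutions produced by the norm fibers already account for the full intersection length given by B\'ezout, so that no intersection point escapes to an extension field, and that in the tangent case the single rational point genuinely carries the whole multiplicity $q+1$. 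Both points hinge on the hypothesis that $L$ is defined over $\F_{q^2}$, which is precisely what forces $\alpha^{1/q}=\alpha^q$ and makes the norm form appear.
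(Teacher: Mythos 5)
Your proof is correct, but it is genuinely different from what the paper does: the paper does not prove this lemma at all, it simply cites part (i) of Lemma 7.3.2 in Hirschfeld's \emph{Projective Geometries over Finite Fields} (reference \cite{h}, p.~247). Your argument is a self-contained replacement for that citation, and it checks out: the homogenization $x^{q+1}=yz^q+y^qz$ and the pencil through $P_\infty=(0:1:0)$ are handled correctly (the line $z=0$ is indeed the tangent at $P_\infty$ with full contact $q+1$, and the vertical lines reduce to the separable trace equation $y+y^q=a^{q+1}$ with $q$ rational roots); in the generic case the two key identities, $f'(x)=x^q-\alpha=(x-\alpha^q)^q$ and $f(x)=(x-\alpha^q)(x^q-\alpha)-\delta$ with $\delta=\alpha^{q+1}+\beta+\beta^q\in\F_q$, are exactly right, and the dichotomy on $\delta$ via the norm map (fibers of size $q+1$ over $\F_q^\ast$, versus the polynomial identity $f(x)=(x-\alpha^q)^{q+1}$ when $\delta=0$, which holds precisely because $\alpha^{q^2}=\alpha$) gives both alternatives of the statement, including the rationality of all intersection points. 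The trade-off is the usual one: the paper's citation keeps Section~3 short and defers to a standard reference where this computation (or an equivalent one) lives, while your proof makes the lemma independent of \cite{h} and has the merit of isolating exactly where the hypothesis that $L$ is defined over $\F_{q^2}$ enters --- namely in the identities $\alpha^{q^2}=\alpha$ and $\beta^{q^2}=\beta$ that turn the restricted equation into a norm form. Two small points you assert without proof, both easily repaired: the transversality of the vertical lines at $P_\infty$ (it follows either from B\'ezout plus the $q$ distinct affine points, or from the fact that the tangent at $P_\infty$ is $z=0$), and the fact that contact order $q+1\ge 2$ at a smooth point forces $L$ to be \emph{the} tangent line there.
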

\begin{proof}
See \cite{h}, part (i) of Lemma 7.3.2, at page 247. 
\end{proof}

\begin{lemma}\label{u5.0}
Fix an integer $e\in \{2,\dots ,q+1\}$ and $P\in 
 X(\mathbb {F}_{q^2})$. Let $E\subseteq X$ be the divisor $eP$, seen as a closed
degree $e$ subscheme of $\mathbb {P}^2$. Let $T\subseteq \mathbb {P}^2$ any
effective divisor
 (i.e. a plane curve, possibly  with multiple components)
 of degree smaller or equal than $e-1$ and such that the contact order of $X$
and $T$ at $P$ is at least $e$. Then $L_{X,P}\subseteq T$, i.e. $L_{X,P}$
is one of the components of $T$.
\end{lemma}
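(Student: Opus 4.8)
The plan is to argue by contradiction, exploiting the extreme tangency of the Hermitian curve recorded in Lemma \ref{intrette} together with a B\'ezout-type count carried out on the tangent line. Throughout, write $L := L_{X,P}$ and $t := \deg(T)$, so that $t \le e-1$ by hypothesis. The single geometric input that drives the whole argument is that $L$ meets $X$ only at $P$, and does so with contact order $q+1$; read scheme-theoretically (and using that $X$ has degree $q+1$, so that $\deg(L \cap X) = q+1$ by B\'ezout), this says precisely that $L \cap X = (q+1)P$ as zero-dimensional subschemes of $X$.

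First I would check that the fat point $E = eP$ is contained in $L$. Since $e \le q+1$, there is a chain of inclusions of subschemes $E = eP \subseteq (q+1)P = L \cap X \subseteq L$ on $X$, the middle equality being the scheme-theoretic form of Lemma \ref{intrette} noted above. Passing to $\Pro^2$, this gives $E \subseteq L$ as subschemes of the plane. This is the step where the precise value $q+1$ of the contact order and the bound $e \le q+1$ are genuinely used, and it is the conceptual heart of the proof.

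Next, suppose for contradiction that $L \not\subseteq T$, i.e. that $L$ is not a component of $T$. Then the scheme-theoretic intersection $T \cap L$ is zero-dimensional and, being cut out on $L \cong \Pro^1$ by a form of degree $t$, has degree exactly $t \le e-1$. On the other hand $E \subseteq T$ by hypothesis and $E \subseteq L$ by the previous step; at the level of ideal sheaves, $\mathcal{I}_{T\cap L} = \mathcal{I}_T + \mathcal{I}_L \subseteq \mathcal{I}_E$, so $E \subseteq T \cap L$. Comparing degrees yields $e = \deg(E) \le \deg(T \cap L) = t \le e-1$, which is absurd. Hence $L \subseteq T$, i.e. $L_{X,P}$ is one of the components of $T$, as claimed.

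I expect the only delicate point to be the scheme-theoretic reading of Lemma \ref{intrette}: one must interpret ``contact order $q+1$'' as the equality $L \cap X = (q+1)P$ of length-$(q+1)$ schemes supported at $P$, rather than as a statement about intersection multiplicities of reduced points, so that the inclusion $E = eP \subseteq L$ becomes transparent. Once this is settled, the remaining B\'ezout count is entirely routine.
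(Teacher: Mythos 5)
Your proof is correct and follows essentially the same route as the paper's: both use the contact order $q+1 \ge e$ of $L_{X,P}$ with $X$ at $P$ to get $E \subseteq L_{X,P}$, and then a B\'ezout count comparing $\deg(E) = e$ with $\deg(T \cap L_{X,P}) \le \deg(T) \le e-1$ to force $L_{X,P} \subseteq T$. Your write-up merely makes explicit (via ideal sheaves and the contradiction structure) what the paper's two-line argument leaves implicit.
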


\begin{proof}
Since $L_{X,P}$ has order of contact $q+1\ge e$ with $X$ at $P$, we have
$E\subseteq L_{X,P}$.
Since $\deg (E)>\deg (T)$ and $E\subseteq T\cap L_{X,P}$, Bezout theorem implies
$L_{X,P}\subseteq T$. 
\end{proof}

The Hermitian curve carries $|X(\F_{q^2})|=q^3+1$ rational points (\cite{Sti},
p. 250, or \cite{h}). It follows that $X$ is a maximal curve, in the sense
of the Hasse-Weil bound (see \cite{Ste}, Chapter 6). 

\begin{notation}
 For any point $P\in X(\mathbb {F}_{q^2})$ we denote by $L_{X,P} \subseteq
\mathbb {P}^2$ the
tangent line to $X$ at $P$.
Clearly, $L_{X,P}$ is a line defined over $\mathbb {F}_{q^2}$. 
\end{notation}

\begin{lemma}\label{u5.00}
Fix integers $d$, $s$ such that  $d\ge s\ge 1$. Choose $s$ distinct points
$P_1,...,P_s\in 
 X(\mathbb {F}_{q^2})$ and $s$ integers $b_1,...,b_s$ such that  $0\le b_i\le
d+2-i$ and $b_i\le q+1$
 for any $i \in \{1,...,s \}$.
 Let $E:= \sum _{i=1}^{s} b_iP_i$ be a degree $b_1+\dots +b_s$ effective
divisor
 of $X$, seen also as a degree $b_1+\dots +b_s$ zero-dimensional
 subscheme of $\mathbb {P}^2$. Then we have $h^1(\mathbb {P}^2,\mathcal
{I}_E(d)) =0$.
\end{lemma}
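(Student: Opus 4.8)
The plan is to prove the vanishing by induction on $s$, peeling off one point at a time via residuation with respect to a tangent line. Since the scheme $E$ and all the tangent lines $L_{X,P_i}$ are defined over $\F_{q^2}$ and the relevant cohomology dimensions on $\Pro^1$ and $\Pro^2$ are insensitive to the base field (Remark \ref{a000.000}), I would argue directly over $\F_{q^2}$. The base case $s=0$ is immediate, as $E=\emptyset$ and $h^1(\Pro^2,\mathcal{O}_{\Pro^2}(d))=0$ for every $d\ge 0$; alternatively one may start at $s=1$, where $\deg(E)=b_1\le d+1$ and the claim is the content of Lemma \ref{u00.01A}(a).

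For the inductive step I would residuate along the tangent line $L:=L_{X,P_1}$. The geometric input is Lemma \ref{intrette}: since $L$ has contact order $q+1$ with $X$ at $P_1$ and $b_1\le q+1$, the length-$b_1$ divisor $b_1P_1$ is contained in $L$; and since $L$ meets $X$ only at $P_1$ among $\F_{q^2}$-rational points, none of $P_2,\dots,P_s$ lies on $L$. Writing $E':=\sum_{i=2}^{s} b_iP_i$, these two facts give $E\cap L=b_1P_1$ and (using $\deg E=\deg(E\cap L)+\deg\mathrm{Res}_L(E)$ from Notation \ref{resid}, together with the disjointness of supports) $\mathrm{Res}_L(E)=E'$. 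The residual exact sequence twisted by $d$ then reads
$$0 \to \mathcal{I}_{E'}(d-1) \to \mathcal{I}_E(d) \to \mathcal{I}_{E\cap L,L}(d) \to 0,$$
where, identifying $L\cong\Pro^1$, the quotient sheaf is $\mathcal{I}_{b_1P_1,L}(d)\cong\mathcal{O}_{\Pro^1}(d-b_1)$. Because $b_1\le d+2-1=d+1$, we have $d-b_1\ge -1$ and hence $h^1(\Pro^1,\mathcal{O}_{\Pro^1}(d-b_1))=0$.

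It then remains to control $h^1(\Pro^2,\mathcal{I}_{E'}(d-1))$, and here the precise shape of the hypothesis $b_i\le d+2-i$ is exactly what makes the induction close. Reindexing the points $P_2,\dots,P_s$ as $Q_1,\dots,Q_{s-1}$ with multiplicities $c_j:=b_{j+1}$, the original bound gives $c_j=b_{j+1}\le d+2-(j+1)=(d-1)+2-j$, together with $c_j\le q+1$ and $(d-1)\ge (s-1)$; these are precisely the hypotheses of the lemma for the parameters $(d-1,s-1)$, so the inductive hypothesis yields $h^1(\Pro^2,\mathcal{I}_{E'}(d-1))=0$. Feeding both vanishings into the long exact cohomology sequence of the displayed short exact sequence squeezes $h^1(\Pro^2,\mathcal{I}_E(d))$ between two zeros, completing the induction. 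I expect the main obstacle to be the bookkeeping in the inductive step rather than any deep point: one must verify that peeling the first point (the one carrying the largest allowed multiplicity $b_1\le d+1$) and simultaneously lowering the degree by one restores the inequalities in the form $c_j\le (d-1)+2-j$, which is why $P_1$ — and not $P_s$ — must be removed. The two hypotheses on the $b_i$ play distinct roles that should be kept separate: $b_i\le q+1$ guarantees via Lemma \ref{u5.0} that $b_iP_i$ sits inside the tangent line so that the residual splits cleanly, while $b_i\le d+2-i$ both kills the $\Pro^1$-term and propagates the induction.
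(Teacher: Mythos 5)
Your proposal is correct and follows essentially the same route as the paper's own proof: induction on $s$, peeling off $P_1$ by residuation along the tangent line $L_{X,P_1}$ (using that $b_1P_1\subseteq L_{X,P_1}$ because of the contact order $q+1$, and that this line misses $P_2,\dots,P_s$), then applying the inductive hypothesis to $E'=\sum_{i\ge 2}b_iP_i$ with the parameters $(d-1,s-1)$ and concluding via the long exact cohomology sequence. Your treatment of the $\mathbb{P}^1$-term is even slightly sharper than the paper's (you use the exact condition $d-b_1\ge -1$, i.e.\ $b_1\le d+1$, where the paper states the looser sufficient bound $t\ge b_i+1$), so nothing further is needed.
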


\begin{proof}
For any integer $j\in \{1,\dots ,s\}$ set $E[j]:= \sum _{i=j}^{s} b_iP_i$ and
$E_i:= b_iP_i$. Hence $E[1]=E$ and $E[i] = \sqcup _{i \le j \le s} E_j$.
We can see each $E[i]$ as a  degree $b_i+\dots +b_s$ zero-dimensional
 subscheme of $\mathbb {P}^2$.
 Since $L_{X,P_i}$ has order of contact $q+1$ with $X$ at $P_i$ and $b_i\le
q+1$,
 we have $E_i\subseteq L_{X,P_i}$. Hence $E[i+1] = E[i]\setminus E[i]\cap
L_{X,P_i}$
 for any $i= 1,\dots ,s-1$. See $E[i]$ and $E[i+1]$ as zero-dimensional
subschemes of $\mathbb {P}^2$ and $L_{X,P_i}$ as a degree $1$ curve of $\mathbb
{P}^2$. Then for any
 $t\in \mathbb {Z}$ and any $i\in \{1,...,s \}$ we get the following exact
sequence
 of coherent sheaves on $\mathbb {P}^2$:
 \begin{equation}\label{equ1}
 0 \to \mathcal {I}_{E[i+1]}(t-1) \to \mathcal {I}_{E[i]}(t) \to \mathcal
{I}_{E_i,L_{X,P_i}}(t) \to 0
 \end{equation}
 in which we see $E_i$ as a degree $b_i$ divisors of $L_{X,P_i} \cong \mathbb
{P}^1$. Hence $h^1(L_{X,P_i},\mathcal
{I}_{E_i,L_{X,P_i}}(t))=0$ for any
$t\ge b_i+1$. Taking $t=d$ and $i=1$ in (\ref{equ1}) we get
 $h^1(\mathbb {P}^2,\mathcal {I}_E(d)) \le h^1(\mathbb {P}^2,\mathcal
{I}_{E[2]}(d-1))$.
 If $s=1$ then we are done, because $E[2]=\emptyset$ in this case. In the
general
case
 we use induction on $s$. Notice that we may apply the inductive
 assumption to $E[2]$ with respect to the integer $d':= d-1$. Hence the
inductive
 assumption gives $h^1(\mathbb {P}^2,\mathcal {I}_{E[2]}(d-1))=0$. Conclude by
using the long
 cohomology exact sequence of (\ref{equ1}) in the case $t=d$ and $i=1$. 
\end{proof}

\begin{remark}\label{goppa}
Fix an integer $s\ge 2$ and take $s$ distinct
points $P_1,..., P_s\in X(\mathbb {F}_{q^2})$. Choose integers $a_1,...,a_s$ and
set $E:=\sum_{i=1}^s a_iP_i$, 
both viewed as a divisor on $X$ and as a zero-dimensional subscheme $E
\subseteq \Pro^2$. For any $\F_{q^2}$-rational point $P\in X(\mathbb {F}_{q^2})$
we have an isomorphism of sheaves $\mathcal {O}_X((q+1)P) \cong \mathcal
{O}_X(1)$. It follows that there exists a rational function $f_P$ such that
$(f_P) = (q+1)P-(q+1)Q$. Set $G:=d(q+1)P_1-E$ and denote by $\mathcal{L}(G)$ the
Riemann-Roch space associated to the divisor $G$. The codes 
obtained evaluating at the points of $B:= X(\F_{q^2})\setminus \{ P_1,...,P_s\}$
the vector spaces
$$H^0(X,\Ol_X(d)(-E)) \ \ \ \ \ \ \mbox{and} \ \ \ \ \ \ \ \mathcal{L}(G)$$
are in fact strongly isometric (see Definition \ref{siso}). 
Notice that codes obtained evaluating a Riemann-Roch space $\mathcal{L}(G)$ at
the rational points of a curve avoiding the support of $G$
are the famous \textbf{Goppa} codes (see \cite{Ste} for further details). We
will denote by $\mC(B,d,-E)$ the code obtained evaluating the vector space
$H^0(X,\Ol_X(d)(-E))$ on the set $B$ defined above and by $\mC^\perp(B,d,-E)$
its dual code. This remark shows that the class of $\mC(B,d,-E)$ codes, here
studied, includes many codes of classical interest in geometric coding theory.
\end{remark}

In the following result the geometry of tangent lines to $X$ is explicitly
involved in proving that we may restrict, in studying 
Hermitian codes, to a very particular subclass of them.

\begin{lemma}\label{u4}
Fix integers $d>0$ and $s\ge 1$. Choose $s$ integers $a_1,...,a_s \in \{1,\dots
,q+1\}$
with $a_1\le \cdots \le a_s$ and denote by $r$ be the maximal integer $i\le s$
with the property 
$a_i\le d-s+i$. Set $d':= d-s+r$ and assume $d'>0$. Set $a'_i:= a_i$ for any
$i\le r$. Fix
$s$ distinct points $P_1,..., P_s\in X(\mathbb {F}_{q^2})$ and set $$E:= \sum
_{i=1}^{s}
a_iP_i, \ \ \ \ \ E':= \sum _{i=1}^{r}a_iP_i.$$ Fix any
$B\subseteq X(\mathbb {F}_{q^2})\setminus \{P_1,..., P_s\}$. Then the codes
$\mathcal {C}(B,d,-E)$ and $\mathcal {C}(B,d',-E')$
are strongly isometric. In particular, their dual codes are the strongly
isometric.
\end{lemma}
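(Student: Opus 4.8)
The plan is to realise the strong isometry explicitly through multiplication by a single form, built from the tangent lines of $X$ at the points that get truncated. Write $\ell_i$ for a linear form cutting out the tangent line $L_{X,P_i}$ and set $M:=\prod_{i=r+1}^{s}\ell_i$, a form of degree $s-r=d-d'$. By Lemma \ref{intrette} each $\ell_i$ restricts on $X$ to a section vanishing to order exactly $q+1$ at $P_i$ and at no other $\F_{q^2}$-rational point; in particular $M$ is nonzero at every point of $B$, since $B$ avoids $P_1,\dots,P_s$. The isometry I aim for sends $\mathcal{C}(B,d',-E')$ to $\mathcal{C}(B,d,-E)$, so the degree $d'=d-s+r$ introduced in the statement is exactly the degree before multiplying by $M$.

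First I would check that multiplication by $M$ maps $H^0(X,\Ol_X(d')(-E'))$ into $H^0(X,\Ol_X(d)(-E))$. The degrees add to $d'+(s-r)=d$; the vanishing along $E'$ is preserved because $M$ is regular and nonzero at $P_1,\dots,P_r$; and for each $i>r$ the factor $\ell_i$ forces $MF'$ to vanish at $P_i$ to order $q+1\ge a_i$ (here $a_i\le q+1$ is used), so $MF'$ vanishes along all of $E$. This map is injective since $M\neq 0$.

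The core step is surjectivity, and this is where the calibration of $r$ and $d'$ does the work. Given $F\in H^0(X,\Ol_X(d)(-E))$, I would peel off the tangent lines one at a time, from $i=s$ down to $i=r+1$. At the stage treating $P_i$ the form at hand has degree $d-(s-i)=d-s+i$ and still vanishes to order $a_i$ at $P_i$ (removing the earlier factors $\ell_j$, $j>i$, changes nothing there because $\ell_j(P_i)\neq 0$ by Lemma \ref{intrette}). By the maximality defining $r$ we have $a_i>d-s+i$ for every $i>r$, hence the running degree satisfies $d-s+i\le a_i-1$ and Lemma \ref{u5.0} forces $L_{X,P_i}$ to be a component, i.e.\ $\ell_i$ divides the form. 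After removing all of $\ell_{r+1},\dots,\ell_s$ the quotient $F':=F/M$ has degree $d'$ and still vanishes to order $a_i$ at each $P_i$ with $i\le r$, so $F'\in H^0(X,\Ol_X(d')(-E'))$. Thus multiplication by $M$ is an isomorphism of the two section spaces.

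Finally I would translate this into strong isometry of the codes, working in the Goppa/function picture of Remark \ref{goppa} so that evaluation is evaluation of functions and well defined across the two degrees. Fixing compatible affine representatives for the points of $B$, the evaluation of $MF'$ equals the coordinatewise product of the vector $(M(P))_{P\in B}$ with the evaluation of $F'$; since every $M(P)$ is nonzero, $\mathcal{C}(B,d,-E)$ is the $M$-scaling of $\mathcal{C}(B,d',-E')$, which is exactly Definition \ref{siso}. The claim for the dual codes then follows from the remark after Definition \ref{siso}. The main obstacle is the surjectivity step: one must verify that the running degree $d-s+i$ stays strictly below $a_i$ precisely for the indices $i>r$, so that Lemma \ref{u5.0} applies at every peeling stage, and that none of the removals disturbs the vanishing conditions imposed at the retained points $P_1,\dots,P_r$.
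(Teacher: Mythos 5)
Your proof is correct and takes essentially the same route as the paper: for each $i>r$ Lemma \ref{u5.0} forces the tangent line $L_{X,P_i}$ to divide every section, so division by these lines gives the isomorphism $H^0(X,\Ol_X(d)(-E))\cong H^0(X,\Ol_X(d')(-E'))$, and Lemma \ref{intrette} guarantees the tangent lines are nonzero on $B$, yielding the strong isometry (you also read the statement correctly, with $d'$ in the second code). Your explicit inverse map $M=\prod_{i=r+1}^{s}\ell_i$ and the stage-by-stage degree bookkeeping simply flesh out, carefully, the division the paper carries out in one sweep.
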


\begin{proof}
If $r=s$ then $d' =d$, $E' =E$ and so
there is noting to prove. Assume $r<s$, i.e. $a_r>d$.
Take any
$f\in H^0(X,\mathcal {O}_X(d)(-E))$, which is a  degree $d$
homogeneous
polynomial vanishing on the
zero-dimensional
scheme $E\subseteq \mathbb {P}^2$. Fix any $i\in \{r+1,..., s\}$.
 Lemma \ref{u5.0} gives that $f$ is divided by the equation of the
tangent line $L_{X,P_i}$ to $X$ at $P_i$. The division by the equations of the
tangent lines
$L_{X,P_i}$, $r+1 \le i \le s$, gives an isomorphism of vector spaces
$$H^0(X,\Ol_X(d)(-E)) \to H^0(X,\Ol_X(d')(-E')).$$
Since a tangent line to $X$ at a rational point $P$ does not intersect $X$ at
any other rational point, the codes
$\mathcal{C}(B,d,-E)$ and $\mathcal{C}(B,d',-E')$ are in fact strongly
isometric. Conclude by Remark \ref{siso} 
\end{proof}

 We notice that Lemma \ref{u00.01} works over
an algebrically closed field.  The following result improves Lemma
\ref{u00.01A} for the case of the Hermitian curve over finite fields.

\begin{lemma}\label{u00.01}
Let $\F_{q^2}$ be the finite field with $q^2$ elements ($q$ a prime power) and
denote by $\Pro^2$ the projective plane over the field $\F_{q^2}$. Let
$X\subseteq \Pro^2$ be the Hermitian curve. Choose an integer $d>0$ and a
zero-dimensional scheme $Z \subseteq X(\F_{q^2})$ of degree $z>0$. The following
facts hold.

\begin{itemize}
\item[(a)] If $z\le d+1$, then $h^1(\mathbb {P}^2,\mathcal {I}_Z(d))=0$.

\item[(b)] If $d+2 \le z\le 2d+1$, then $h^1(\mathbb {P}^2,\mathcal {I}_Z(d))>0$
if and only if there
exists a line $T_1$ such that $\deg (T_1\cap Z)\ge d+2$.

\item[(c)] If $2d+2\le z \le 3d-1$ and $d\ge 2$, then $h^1(\mathbb
{P}^2,\mathcal {I}_Z(d))>0$ if and only if
either there
exists a line $T_1$ defined over $\F_{q^2}$ such that $\deg (T_1\cap Z)\ge d+2$,
or there exists a conic $T_2$ defined over $\F_{q^2}$ such that $\deg (T_2\cap
Z) \ge 2d+2$.

\item[(d)] Assume $z=3d$ and $d\ge 3$. Then $h^1(\mathbb {P}^2,\mathcal
{I}_Z(d))>0$ if and only if
either there
exists a line $T_1$ defined over $\F_{q^2}$ such that $\deg (T_1\cap Z)\ge d+2$,
or there is a conic $T_2$ defined over $\F_{q^2}$ such that $\deg (T_2\cap Z)
\ge 2d+2$, or there exists a plane cubic
$T_3$ such that $Z$ is the complete intersection of $T_3$ and a plane curve of
degree
$d$. In the latter case, if $d\ge 4$ then $T_3$ is unique and defined over
$\F_{q^2}$ and we may
find a plane curve $C_d$ defined over $\F_{q^2}$ and with $Z = T_3\cap C_d$.

\item[(e)] Assume $z \le 4d-5$ and $d\ge 4$.  Then $h^1(\mathbb {P}^2,\mathcal
{I}_Z(d))>0$ if and only if
either there
exists a line $T_1$ defined over $\F_{q^2}$ such that $\deg (T_1\cap Z)\ge d+2$,
or there exists a conic $T_2$ defined over $\F_{q^2}$ such that $\deg (T_2\cap
Z) \ge 2d+2$, or there exist $W\subseteq Z$ defined over $\F_{q^2}$ with $\deg
(W)=3d$ and plane cubic
$T_3$ defined over $\F_{q^2}$ such that $W$ is  the complete intersection of
$T_3$ and a plane curve of degree
$d$, or there is a plane cubic $C_3$ defined over $\F_{q^2}$ such that $\deg
(C_3\cap Z)\ge 3d+1$.
\end{itemize}

\end{lemma}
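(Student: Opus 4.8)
The plan is to derive everything from Lemma \ref{u00.01A} applied over the algebraic closure $\overline{\F_{q^2}}$, to transport the values of $h^1$ between $\F_{q^2}$ and $\overline{\F_{q^2}}$ by Remark \ref{a000.000}, and then to descend the auxiliary curves produced over the closure down to $\F_{q^2}$. The structural feature I would lean on is that $Z\subseteq X(\F_{q^2})$ is a reduced scheme all of whose points are $\F_{q^2}$-rational: the $q^2$-power Frobenius fixes $Z$ pointwise, so every subscheme $W\subseteq Z$ is automatically defined over $\F_{q^2}$, and each intersection degree $\deg(T\cap Z)$ is simply the number of points of $Z$ lying on the curve $T$. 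This is what will let me replace every Galois-descent step by a base-change dimension count.

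The ``if'' directions of (b)--(e) are immediate: a line, conic or cubic witnessing the condition over $\F_{q^2}$ is a fortiori such a curve over $\overline{\F_{q^2}}$ with the same intersection data, so Lemma \ref{u00.01A} gives $h^1(\Pro^2,\mathcal{I}_Z(d))>0$ over the closure, and Remark \ref{a000.000} returns the same inequality over $\F_{q^2}$; part (a) is literally Lemma \ref{u00.01A}(a). Thus the whole content is the ``only if'' directions. Here Lemma \ref{u00.01A} over $\overline{\F_{q^2}}$ hands me a curve with the stated property, and I must produce one defined over $\F_{q^2}$. For every \emph{secant} alternative --- a line $T_1$ with $\deg(T_1\cap Z)\ge d+2$, a conic $T_2$ with $\deg(T_2\cap Z)\ge 2d+2$, or the cubic $C_3$ with $\deg(C_3\cap Z)\ge 3d+1$ of (e) --- I would set $W:=T\cap Z$, a set of rational points and hence a scheme defined over $\F_{q^2}$. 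Writing $\delta\in\{1,2,3\}$ for the degree, the space $H^0(\Pro^2,\mathcal{I}_W(\delta))$ is defined over $\F_{q^2}$ and, by Remark \ref{a000.000}, has the same dimension over $\F_{q^2}$ and over $\overline{\F_{q^2}}$; since $T$ gives a nonzero section over the closure, there is a nonzero section over $\F_{q^2}$, that is, a degree-$\delta$ curve over $\F_{q^2}$ containing $W$ and so meeting $Z$ in at least $|W|$ points.

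The genuine obstacle is the complete-intersection alternative of (d) and (e), where $Z$ (respectively a rational subscheme $W\subseteq Z$ of degree $3d$) is, over the closure, the complete intersection $T_3\cap C_d$, and both the cubic $T_3$ and a degree-$d$ partner must be descended. For $d\ge 4$ I would first descend $T_3$ by invoking the uniqueness recorded in Lemma \ref{u00.01A}(d): uniqueness means $h^0(\Pro^2,\mathcal{I}_Z(3))=1$ over the closure, hence $=1$ over $\F_{q^2}$ by Remark \ref{a000.000}, so the rational cubic producing this section must be the unique $T_3$, which is therefore defined over $\F_{q^2}$. To descend $C_d$ I would work on $T_3$ itself: the complete-intersection hypothesis makes $\mathcal{I}_{Z,T_3}(d)\cong\mathcal{O}_{T_3}(d)(-Z)$ trivial over the closure, so $h^0(T_3,\mathcal{I}_{Z,T_3}(d))$ is positive and, being base-change invariant, remains positive over $\F_{q^2}$; using the surjection $H^0(\Pro^2,\mathcal{I}_Z(d))\to H^0(T_3,\mathcal{I}_{Z,T_3}(d))$, whose surjectivity comes from $H^1(\Pro^2,\mathcal{O}(d-3))=0$, I lift a rational section to a rational degree-$d$ form cutting $Z$ out of $T_3$. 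The point I expect to cost the most care is checking that this $C_d$ meets $T_3$ \emph{properly}, so that $T_3\cap C_d=Z$ as schemes rather than acquiring a spurious common component; this forces one to control the chosen section on each connected component of $T_3$, which may be reducible or non-reduced, and it is here that the argument stops being purely formal.
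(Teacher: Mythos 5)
Your proposal is correct in substance and follows the same skeleton as the paper's proof (apply Lemma \ref{u00.01A} over $\overline{\F_{q^2}}$, transport the $h^1$ condition by Remark \ref{a000.000}, then descend the auxiliary curves), but your descent mechanism is genuinely different. The paper descends synthetically: it notes that $Z_{red}$ consists of $\F_{q^2}$-rational points and invokes position facts (a line through two rational points is rational; a conic through five rational points, no four collinear, is rational; a cubic through twelve rational points in suitable position is rational), and for the complete-intersection case it argues that the homogeneous ideal of $Z$ is generated by forms defined over $\F_{q^2}$, so that $Z$ is a complete intersection of rational curves. You instead replace the specific curve $T$ found over the closure by \emph{some} rational curve of the same degree: since $W:=T\cap Z$ is a set of rational points, $h^0(\Pro^2,\mathcal{I}_W(\delta))$ is base-change invariant and positive, so a rational degree-$\delta$ curve through $W$ exists and meets $Z$ in at least $\deg(T\cap Z)$ points, which is all the statement requires. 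This buys something real: it needs no general-position hypotheses at all, whereas the paper's three ``obvious facts'' carry hypotheses (no four points on a line, no eight on a conic, etc.) that the paper never verifies for the configurations actually produced by Lemma \ref{u00.01A} -- for instance a reducible conic one of whose components carries most of the points. Your argument handles the line, conic and cubic secant alternatives uniformly.

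The one place you stop short -- the proper-intersection check for the descended $C_d$ in part (d) -- in fact closes formally, so it is not a genuine obstruction. From $0\to\mathcal{O}_{\Pro^2}(-3)\to\mathcal{O}_{\Pro^2}\to\mathcal{O}_{T_3}\to 0$ one gets $h^0(T_3,\mathcal{O}_{T_3})=1$ for \emph{any} plane cubic divisor $T_3$ (reducible or non-reduced), hence $h^0(T_3,\mathcal{I}_{Z,T_3}(d))=1$ by your isomorphism $\mathcal{I}_{Z,T_3}(d)\cong\mathcal{O}_{T_3}$. So every nonzero rational section is a scalar multiple of the generator, its zero scheme on $T_3$ is exactly $Z$ (in particular it vanishes identically on no component of $T_3$), and its lift under the surjection $H^0(\Pro^2,\mathcal{I}_Z(d))\to H^0(T_3,\mathcal{I}_{Z,T_3}(d))$ is the required rational $C_d$ with $T_3\cap C_d=Z$; alternatively, the paper's ideal-theoretic argument (replace the degree-$d$ generator $G_d$ of $I(Z)=(F_3,G_d)$ by a rational form congruent to it modulo $F_3$) sidesteps the issue entirely. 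One small imprecision to fix: you infer $h^0(\Pro^2,\mathcal{I}_Z(3))=1$ from the uniqueness of $T_3$, which is backwards as a logical step, since a priori other cubics through $Z$ need not realize it as a complete intersection. What is true, and what you need, is that a length-$3d$ complete intersection of type $(3,d)$ with $d\ge 4$ has saturated ideal generated in degrees $3$ and $d$, so its degree-$3$ graded piece is one-dimensional; this yields simultaneously the uniqueness and, by base-change invariance of $h^0$, the rationality of $T_3$.
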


\begin{proof}
  Let $Z_{\overline{\F_{q^2}}}$ denote the scheme $Z$, but seen over
$\overline{\F_{q^2}}$. Assume to be in one of the cases
(a), (b), (c), (d) or (e) of the statement.
By Remark \ref{a000.000} the cohomological non-vanishing condition is the same
over $\F_{q^2}$
or over $\overline{\F_{q^2}}$. Hence (a) holds for $Z$ over $\F_{q^2}$, while in
the other case we need
to inquire whether the curves of low degree claimed in the statement are defined
over $\F_{q^2}$, or not. First assume $z = 3d$ and that
$Z_{\overline{\F_{q^2}}}$ is the complete
intersection of a plane cubic and a degree $d$ curve. By Remark \ref{a000.000}
the homogeneous ideal of $Z$ in $\mathbb {P}^2$ is generated by forms defined
over $\F_{q^2}$. Hence $Z$ is the complete
intersection of a plane cubic defined over $\F_{q^2}$ and a degree $d$ curve
defined over $\F_{q^2}$. The
other cases are in general more complicated, but in the applications to the
Hermitian curve
we know more about $Z$: not only $Z$ is defined over $\F_{q^2}$, but each
connected component
of it is defined over $\F_{q^2}$ and hence $Z_{red} \subseteq \mathbb
{P}^2_{\F_{q^2}}$; moreover, we
also know that the lines $D$ with $\deg (D\cap Z) \ge 2$ are defined over
$\F_{q^2}$. Hence it is sufficient to notice the following facts.
\begin{itemize}
 \item[(1)] A line of $\mathbb {P}^2_{\overline{\F_{q^2}}}$ containing two
points of
$\mathbb {P}^2$
is defined over $\F_{q^2}$.
\item[(2)] A conic of $\mathbb {P}^2_{\overline{\F_{q^2}}}$ containing $5$
points of
$\mathbb {P}^2$, no
$4$ of them on a line, is defined over $\F_{q^2}$.
\item[(3)] Let $C$ be a plane cubic defined over $\overline{\F_{q^2}}$ and
containing
a set $S\subseteq \mathbb {P}^2$ of at least 12 points. Assume that no $5$ of
the points of $S$ are contained
in a line and no $8$ of the points of $S$ are contained in a conic. Then $C$ is
defined over $\F_{q^2}$.
\end{itemize} 
\end{proof}

\section{Hermitian codes} \label{hhh2}

In this last section we apply our results and describe the dual minimum distance
of many $s$-point codes on the 
Hermitian curve (with $s \ge 2$). For $s \ge 3$ this parameter was unknown,
except in some particular cases covered by \cite{c}. Until the end of the paper
we work over the finite field $\F_{q^2}$ with $q^2$ elements ($q$ fixed) and we
will denote by $X \subseteq \Pro^2$ the Hermitian curve defined in Section
\ref{hhh}. The following Theorem \ref{u5} and Theorem \ref{m1} describe
Hermitian three-point codes, while Theorem \ref{u0.1} and Theorem \ref{m3} deal
with the more complicated case of $s$-point codes with $s \ge 2$ arbitrary.

\begin{theorem}\label{u5}
Fix any three distinct points $P_1,P_2,P_3\in 
 X(\mathbb {F}_{q^2})$ and assume $P_1$, $P_2$ and $P_3$ to be not collinear.
 Set $B:=  X(\mathbb {F}_{q^2})\setminus \{P_1,P_2,P_3\}$. Fix
an integer $d \ge 5$ such that $1\le d\le q-1$ and integers $a_1,a_2,a_3 \in
\{1,\dots ,d\}$
such that $a_1+a_2+a_3 \le 3d-5$ and $a_i=d$ for at most one index $i \in \{
1,2,3\}$. Set $E:= a_1P_1+a_2P_2+a_3P_3$. 
 Let $\mathcal {C}:= \mathcal {C}(B,d,-E)$ be the code obtained evaluating the
vector space $H^0(X,\Ol_X(d)(-E))$ on the set $B$. Then $\mathcal {C}$ is a code
of length $n:=|B|=q^3-2$ and dimension $k:= \binom{d+2}{2} -a_1-a_2-a_3$. For
any $i \in \{ 1,2,3 \}$ let $L_i$ denote the line spanned
 by $P_j$ and $P_h$ with $\{i,j,h\} = \{1,2,3\}$. Then $\mathcal {C}^{\bot}$ has
minimum distance $d$ and 
 its minimum-weight codewords are exactly the ones whose support is formed by
$d$ points of $B\cap L_i$
 for some $i \in \{ 1,2,3\}$.
 \end{theorem}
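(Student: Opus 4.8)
The plan is to read off the length and dimension from Lemma~\ref{e1} and then to convert both the minimum-distance and the minimum-weight assertions into cohomological conditions on the scheme $Z:=E\cup S$, which I will control through Lemma~\ref{u00.01}. The length is $|B|=|X(\mathbb{F}_{q^2})|-3=q^3-2$. Since $\deg X=q+1$ and $d\le q-1<q+1$, part~(1) of Lemma~\ref{e1} gives $h^0(X,\mathcal{O}_X(d))=\binom{d+2}{2}$, and the hypothesis $|B|>d(q+1)-\deg E$ holds because $d(q+1)\le q^2-1<q^3-2=|B|$ while $\deg E>0$. For the dimension I must check $h^1(\mathbb{P}^2,\mathcal{I}_E(d))=0$: after reordering so that the coefficient equal to $d$ (if any) comes first, the bounds $b_1\le d+1$, $b_2\le d$, $b_3\le d-1$ required by Lemma~\ref{u5.00} follow from $a_i\le d$, from ``$a_i=d$ for at most one $i$'', and from $a_i\le q+1$; hence $h^1(\mathbb{P}^2,\mathcal{I}_E(d))=0$ and part~(2) of Lemma~\ref{e1} yields $k=\binom{d+2}{2}-(a_1+a_2+a_3)$. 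By parts~(3)--(4) of Lemma~\ref{e1} the dual minimum distance is the least $|S|$ with $h^1(\mathbb{P}^2,\mathcal{I}_{E\cup S}(d))>0$; once this value is shown to be $d$, condition~(c) of part~(4) is automatic by the Remark following Lemma~\ref{e1}, so the minimum-weight supports are exactly the $d$-subsets $S\subseteq B$ with $h^1(\mathbb{P}^2,\mathcal{I}_{E\cup S}(d))>0$. Throughout, $\deg Z=|S|+(a_1+a_2+a_3)\le(d-1)+(3d-5)=4d-6\le 4d-5$, so Lemma~\ref{u00.01} applies.

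The computational core is a single intersection estimate. For a plane curve $D$ of degree $\delta$, the containment $a_iP_i\subseteq L_{X,P_i}$ (Lemma~\ref{u5.0}), where $a_iP_i$ is the divisor $a_i[P_i]$ on the line $L_{X,P_i}\cong\mathbb{P}^1$, gives by B\'ezout
\begin{equation*}
\deg\big(D\cap a_iP_i\big)=
\begin{cases}
a_i & \text{if } L_{X,P_i}\subseteq D,\\
\le\min(a_i,\delta) & \text{otherwise;}
\end{cases}
\end{equation*}
moreover, by Lemma~\ref{intrette} each $L_{X,P_i}$ meets $X$ only at $P_i$, so no point of $S\subseteq B$ lies on any $L_{X,P_i}$, while $\deg(D\cap S)\le|S|$. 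The upper bound on the distance is now immediate: a secant $L_i$ (through the two points $P_j,P_h$) carries $q-1\ge d$ points of $B$, and choosing $d$ of them gives $\deg(L_i\cap Z)=d+2$, whence $h^1(\mathbb{P}^2,\mathcal{I}_Z(d))>0$ and $d(\mathcal{C}^\perp)\le d$.

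For the lower bound, fix $S$ with $|S|\le d-1$ and suppose $h^1(\mathbb{P}^2,\mathcal{I}_{E\cup S}(d))>0$; I rule out every alternative of Lemma~\ref{u00.01} by bounding $\deg(D\cap Z)$ for $\delta=1,2,3$, splitting on how many of the three distinct tangent lines $L_{X,P_1},L_{X,P_2},L_{X,P_3}$ occur among the (at most $\delta$) components of $D$. A line gives at most $(d-1)+2=d+1<d+2$; a conic gives at most $2d+1<2d+2$ (the extreme case being one tangent-line component plus a secant through the remaining two $P$'s and $d-1$ points of $S$); and a cubic gives at most $(a_1+a_2)+1+(d-1)\le 3d-1$, using that any two of the $a_i$ sum to at most $2d-1$. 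Since $3d-1<3d$, no degree-$3d$ subscheme of $Z$ can lie on a cubic either, so the complete-intersection alternative is excluded as well. Hence $h^1=0$ for all $|S|\le d-1$, and $d(\mathcal{C}^\perp)=d$.

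Finally I characterise the minimum-weight supports by rerunning the casework with $|S|=d$ and tracking equality. A line attains $d+2$ only when all of $S$ and two of the $P_j$ lie on it, forcing $S\subseteq L_i$ for a secant $L_i$; a conic attains $2d+2$ only when one component is a tangent line $L_{X,P_j}$ (so $a_j=d$) and the other is a secant carrying the remaining two points and all of $S$, again forcing $S\subseteq L_j$; and a pure cubic still cannot reach $3d+1$ since the bound is now $3d$. The single delicate point is the cubic complete-intersection alternative, where $\deg(T_3\cap Z)$ can equal $3d$, attained by two tangent-line components $L_{X,P_1}\cup L_{X,P_2}$ together with a line $L''$ carrying $P_3$ and all of $S$; here I argue that $W=T_3\cap Z=T_3\cap C_d$ would force the degree-$d$ curve $C_d$ to cut out the divisor $a_jP_j$ on each tangent line $L_{X,P_j}\subseteq T_3$, hence $a_j=d$ for two indices $j$, contradicting ``$a_i=d$ for at most one $i$''. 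Thus every minimum-weight support lies on some secant $L_i$, and conversely every $d$-subset of $B\cap L_i$ is such a support by the construction above, which is the stated characterisation. I expect this last complete-intersection case to be the main obstacle: it is not killed by a crude degree count, since the value $3d$ is genuinely attained, and is closed only by the structural observation that a complete intersection on the degenerate cubic $L_{X,P_1}\cup L_{X,P_2}\cup L''$ would impose full contact $a_j=d$ along two tangent lines simultaneously.
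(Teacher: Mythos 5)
Your proposal is correct and takes essentially the same route as the paper's proof: parameters via Lemma~\ref{e1} and Lemma~\ref{u5.00}, reduction of the dual distance and the supports to the non-vanishing of $h^1(\mathbb{P}^2,\mathcal{I}_{E\cup S}(d))$, and a case analysis on the low-degree curve produced by Lemma~\ref{u00.01} according to which tangent lines $L_{X,P_i}$ it contains. Even your treatment of the delicate degenerate cubic $L_{X,P_1}\cup L_{X,P_2}\cup L''$ matches the paper's: both close that case by restricting the alleged complete intersection $T_3\cap C_d$ to a tangent-line component and comparing degrees (the paper notes $a_2=d-1$ makes $E_2$ too small, you derive $a_j=d$ for two indices, which is the same B\'ezout observation).
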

\begin{proof}

The length of $\mathcal{C}$ is obviously $n = |B| = q^3-2$. Since $d\le q < \deg
(X)$, we have $h^0(X,\mathcal {O}_X(d)) =\binom{d+2}{2}$. If, say, $a_1\ge
a_2\ge a_3$, the assumptions $a_1\le d$ and $a_1+a_2+a_3   \le 3d-1$
give $a_i\le d+2-i$ for all $i$. Hence Lemma \ref{u5.00} implies $h^1(\mathbb
{P}^2,\mathcal {I}_E(d)) = 0$ and so $ h^0(X,\mathcal {O}_X(d)(-E)) =
\binom{d+2}{2} -a_1-a_2-a_3 =k$. Since $|B|
> d\cdot \deg (X)$, there is not a non-zero element
of $H^0(X,\mathcal {O}_X(d))$ vanishes at all the points of $B$. Hence $\mathcal
{C}$ has dimension $k$.
By Lemma \ref{e1} it is sufficient to prove the following two facts. 
\begin{itemize}
 \item[(a)] $h^1(\mathbb {P}^2,\mathcal {I}_{E\cup A}(d)) =0$ for all
$A\subseteq B$ such that $|A|\le d-1$.
\item[(b)] For any
$S\subseteq B$ such that $|S| =d$ we have
$h^1(\mathbb {P}^2,\mathcal {I}_{E\cup S}(d)) >0$ if and only if $S\subseteq
L_i$ for some $i \in \{ 1,2,3\}$.
\end{itemize}
Each line $L_i$ contains $q-1$ points of $B$, while $\deg (E\cap L_i)=2$.
Hence for any $S\subseteq L_i\cap B$ with $|S| = d$ we have $h^1(\mathbb
{P}^2,\mathcal {I}_{E\cup S}(d)) >0$ (see Lemma
\ref{u00.01}). Let $E_i:=a_iP_i$, view as a divisor on $X$. We have $E
=E_1\sqcup E_2\sqcup E_3$. Fix a set $S\subseteq B$ such that $|S|\le d$
and assume $h^1(\mathbb {P}^2,\mathcal {I}_{E\cup S}(d)) >0$. We have $S\cap
\{P_1,P_2,P_3\} =\emptyset$ and $\deg (E\cup S) = a_1+a_2+a_3+|S|$.
Since $a_1+a_2+a_3 +|S| \le 4d-5$, we may apply
Lemma \ref{u00.01} to the scheme $E\cup S$. Let $T\subseteq \mathbb {P}^2$ be
the curve arising from the statement of the lemma. Set $x:= \deg (T) \in
\{1,2,3\}$ and $e_i:= \deg (T\cap E_i)$ for $i \in \{1,2,3\}$. We have $0 \le
e_i\le a_i$. If $e_i\ge x+1$ then
Lemma \ref{u5.0} gives $L_{X,P_i}\subseteq T$. Assume $e_i\le x$ for all $i \in
\{ 1,2,3\}$. 
For $x=2$ we get $\deg (T\cap (E\cup S)) \le d+6 \le 2d+1$.
For $x=3$ we get $\deg (T\cap (E\cup S)) \le d+9 \le 3d-1$. Finally, for $x=1$
we may have
$e_i>0$ only for at most two indices, say $i=1,2$. Since $|S|\le d$, we get
$|S|+e_1+e_2\ge d+2$ and $|S|+e_1+e_2=d+2$ if and only if $T =L_3$, $S\subseteq
L_3\cap B$ and $|S|=d$.
Now assume that $T$ contains one of the lines $L_{X,P_i}$, say $L_{X,P_1}$. Let
$T'$ be the curve whose
equation is obtained dividing an equation of $T$ by an equation of $L_{X,P_1}$.
We have
$\deg (T') =x-1$, $T'+L_{X,P_1} = T$ (as divisors of $\mathbb {P}^2$) and $T =
L_{X,P_1}\cup T'$
(as sets). Since $L_{X,P_1}\cap B=\emptyset$, we have $T\cap S
= T'\cap S$ and $\deg (T\cap (E\cup S)) = \deg (T'\cap (E_2\cup E_3\cup S))$.
\begin{itemize}

 \item[(i)] If $x=1$, we
get $T\cap S=\emptyset$ and $\deg (T\cap E)
= a_1 \le d$, a contradiction.

\item[(ii)] Assume $x=2$.  The curve $T'$ must be a line such that $\deg (T'\cap
(E_2\cup E_3 \cup S))
\ge 2d+2-a_1$. If either $T' = L_{X,P_2}$, or $T' = L_{X,P_3}$, we
get $T'\cap S = \emptyset$ and $\deg (T'\cap (E_2\cup E_3\cup S)) \le \max
\{e_2,e_3\} \le d$, a contradiction. If neither $T' = L_{X,P_2}$, nor $T' =
L_{X,P_3}$, then $\deg (T'\cap E_2)\le 1$,
$\deg (T\cap E_3)\le 1$ and $\deg (T'\cap (E_2\cup E_3)) =2$ if and only if $T'
=L_1$.
Since $|S| \le d$ we deduce $\deg (T\cap (E\cup S))  \le a_1+2+|S|$. Moreover,
the equality holds
if and only if $T' = L_1$ and $S\subseteq L_1$. Since
$\deg (T\cap (E\cup S)) \ge 2d+2$ by assumption, $|S|=d$ and $S\subseteq L_1$,
as claimed.

\item[(iii)]Now
assume $x=3$. We get $\deg (T'\cap (E_2\cup E_3\cup S)) \ge 3d-a_1$ and $T'$ is
a conic.
If neither
$L_{X,P_2}$, nor $L_{X,P_3}$, is a component of $T$
then Lemma \ref{u5.0} gives $e_2\le 2$ and $e_3\le 2$ and so $|T'\cap S| \ge
3d-4-a_1
\ge 2d-4 > d$. If, say, $T'$ contains
$L_{X,P_2}$ and $T''$ is the line with $T' = T''+L_{X,P_2}$, then we get
$|(S\cup E_3)\cap T''| \ge 3d-a_1-a_2$. Since
$a_1+a_2 \le 2d-1$ we deduce $\deg (T''\cap (E_3\cup S)) \ge d+1$. Since $\deg
(T''\cap E_3)\le 1$,
we get
$a_1+a_2=2d-1$, say $a_1=d$ , $a_2=d-1$ and that $S$ is formed by $d$ points on
a line
$T''$ through $P_3$. If either $T''= L_1$ or $T''= L_3$, then we are done. In
any case it is sufficient to prove that
the case $x=3$ of Lemma \ref{u00.01} does not apply, i.e. that $E_1\cup E_2\cup
\{P_3\}\cup S$ is not
the complete intersection of $T = L_{X,P_1}\cup L_{X,P_2}\cup T''$ and a degree
$d$ curve, say $C_d$.
Since $a_2=d-1$, $E_2$ is not the complete intersection of $L_{X,P_2}$ and
$C_d$, while $L_{X,P_2}\cap (\{P_3\}\cup S)
= \emptyset$, a contradiction.
\end{itemize}
\end{proof}

\begin{remark}
 Let us compare the results of Theorem \ref{u5} with the classcal Goppa bound
for codes obtained from curves. Choose $B$, $d$, $P_1$, $P_2$, $P_3$, $a_1$,
$a_2$, $a_3$  as in the theorem. Denote by $P_\infty:=(0:1:0)$ the point at
infinity of
the Hermitian curve $X$. As in Remark \ref{goppa}, we have
$$H^0(X,\Ol_X(d)) = \mathcal{L}(d(q+1)P_\infty).$$
As a consequence, $\mathcal{C}:=\mathcal {C}(B,d,-E)$ is the code obtained
evaluating the Riemann-Roch space $\mathcal{L}(G)$ at the points of $B$, where
$G:=d(q+1)P_\infty - a_1P_1-a_2P_2-a_3P_3$.
The dual minimum distance of $\mathcal{C}$ is lower-bounded by $\deg(G)-(2g-2)$
(see \cite{Sti}, Theorem 2.2.7). Since $d \le q-1$, we have 
$$\deg(G)-(2g-2)=d(q+1)-a_1-a_2-a_3-q(q-1)+2 \le 
q+1-a_1-a_2-a_3.$$
As a consequence, if $d>q+1-a_1-a_2-a_3$ then the dual minimum distance of
$\mathcal{C}$ is higher than the designed distance $\deg(G)-(2g-2)$.

\end{remark}

\begin{remark}\label{m2}
Take the set-up of Theorem \ref{u5} and assume $d\ge 6$, $a_1=a_2=d$ and $1 \le
a_3\le d-5$. Take any line $L$ through $P_3$ with $L\ne L_{X,P_3}$ and
any $S\subseteq L\cap B$ such that $|S|=d$. Then $S$ is the support
of a codeword of $\mathcal {C}^{\bot}$ of weight $d$. Indeed, Lemma \ref{u5.00}
gives $h^1(\mathbb {P}^2,\mathcal {I}_E(d))=0$. Hence it is enough to prove that
$h^1(\mathbb {P}^2,\mathcal {I}_{E_1\cup E_2\cup \{P_3\}\cup S}(d)) >0$. Since
$\deg (E_1\cup E_2\cup \{P_3\} \cup S) = 3d+1$ and $E_1\cup E_2\cup \{P_3\}\cup
S$ is contained
into the degree $3$ curve $L_{X,P_1}\cup L_{X,P_2}\cup L$, we can simply apply
the
``~if~'' part of Lemma \ref{u00.01}, part (e).
\end{remark}

In the following result we modify the set $B$ in the statement of Theorem
\ref{u5} in order to obtain (under certain assumptions) $\mC^\perp$ codes of
improved parameters. To be precise, we will exhibit codes whose dual minimum
distance is $d+1$ instead of $d$.

\begin{theorem}\label{m1}

Fix any three distinct points $P_1,P_2,P_3\in 
 X(\mathbb {F}_{q^2})$ and assume $P_1$, $P_2$ and $P_3$ to be not collinear.
Fix
an integer $d \ge 5$ such that $1\le d\le q-1$ and integers $a_1,a_2,a_3 \in
\{1,\dots ,d\}$
such that $a_1+a_2+a_3 \le 3d-5$ and $a_1+a_2 \le 2d-2$. Set $E:=
a_1P_1+a_2P_2+a_3P_3$ and $B':= X(\mathbb {F}_{q^2})\setminus (X(\mathbb
{F}_{q^2})\cap (L_1\cup L_2\cup L_3))$, where $L_1, L_2,L_3$ are the three lines
spanned by $P_1,P_2,P_3$. Let $\mathcal{C}:=\mathcal{C}(B',d,-E)$ be the code
obtained evaluating the vector space $H^0(X,\Ol_X(d)(-E))$ on the set $B'$. Then
$\mathcal{C}$ has length $n=q^3-3q+1$ and dimension
$k:=\binom{d+2}{2}-a_1-a_2-a_3$.
Let $\mathcal {S}$ denote be the set of all lines in $\Pro^2$ defined over
$\mathbb {F}_{q^2}$ through one of the points $P_1,P_2,P_3$, but
 different from $L_1,L_2,L_3$ and from the tangent lines $L_{X,P_i}$, $i=1,2,3$,
to $X$ at
 $P_i$. Let $\mathcal {S}(d+1)$ denote the set
 of all the subsets $S\subseteq B$ such that $|S|=d+1$ and contained in some
line $L\in\mathcal {S}$. We have $|\mathcal {S}(d+1)| = 3(q^2-2)\binom{q}{d+1}$.
The minimum distance of $\mathcal{C}^{\bot }$
 is $d+1$ and for each $S\in  \mathcal {S}(d+1)$ there exists a minimum-weight
codeword (unique up to a scalar multiplication) with $S$ as its support.
 If $d\ge 6$ and $a_1+a_2+a_3\le 3d-6$ then all the minimum-weight codewords of
$\mathcal{C}^{\bot }$ 
 arise in this way from a unique $S\in \mathcal {S}(d+1)$.
\end{theorem}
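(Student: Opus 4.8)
The plan is to translate the problem, through Lemma \ref{e1}(3), into finding the least $|S|$ with $S \subseteq B'$ and $h^1(\mathbb{P}^2, \mathcal{I}_{E \cup S}(d)) > 0$, and then to analyse such minimal $S$ by means of Lemma \ref{u00.01}. Two features of the set-up drive everything: $B'$ meets none of the lines $L_1, L_2, L_3$ nor any tangent line $L_{X,P_i}$, and --- after reordering the points so that $a_1 \ge a_2 \ge a_3$, which is harmless by symmetry --- the hypothesis $a_1 + a_2 \le 2d-2$ bounds every pairwise sum $a_i + a_j$ by $2d-2$ (and, together with $a_1 + a_2 + a_3 \le 3d-5$, forces $a_2 \le d-1$ and $a_3 \le d-2$). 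The routine parts come first. Inclusion--exclusion gives $|X(\F_{q^2}) \cap (L_1 \cup L_2 \cup L_3)| = 3(q+1) - 3 = 3q$, since each $L_i$ is a secant carrying $q+1$ rational points, $L_i \cap L_j$ is the single point $P_k \in X$, and the three lines form a triangle; hence $n = (q^3+1) - 3q = q^3 - 3q + 1$. As $d \le q-1 < \deg X$ we have $h^0(X, \mathcal{O}_X(d)) = \binom{d+2}{2}$, and $a_1 + a_2 + a_3 \le 3d-5$ gives $a_i \le d+2-i$, so Lemma \ref{u5.00} yields $h^1(\mathbb{P}^2, \mathcal{I}_E(d)) = 0$ and Lemma \ref{e1}(2) gives $k = \binom{d+2}{2} - a_1 - a_2 - a_3$.

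For the bound $d(\mathcal{C}^\perp) \ge d+1$ I would suppose $|S| \le d$ with $h^1(\mathbb{P}^2, \mathcal{I}_{E \cup S}(d)) > 0$ and seek a contradiction. Since $\deg(E \cup S) \le (3d-5)+d = 4d-5$, Lemma \ref{u00.01} supplies a curve $T$ of degree $x \in \{1,2,3\}$; writing $e_i := \deg(T \cap a_i P_i)$ and using Lemma \ref{u5.0} (so $e_i \ge x+1$ forces $L_{X,P_i} \subseteq T$), I run the case analysis of Theorem \ref{u5}. The decisive new input is that any line, or residual line, equal to some $L_k$ or to a tangent $L_{X,P_i}$ carries no point of $S$, because $B' \cap (L_1 \cup L_2 \cup L_3) = \emptyset$ and a tangent meets $X$ only at its contact point. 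With $|S| \le d$ one then checks, as in Theorem \ref{u5}, that a line stays below $d+2$, a conic below $2d+2$, and a cubic below $3d+1$ whenever $d \ge 5$; the only borderline configurations --- in which two or three tangent lines split off from $T$ --- are excluded by $a_i + a_j \le 2d-2$ and $a_1 + a_2 + a_3 \le 3d-5$.

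The one genuinely new difficulty, and the step I expect to be the main obstacle, is the complete-intersection alternative of Lemma \ref{u00.01}(e): a degree-$3d$ subscheme $W \subseteq E \cup S$ equal to the complete intersection of a cubic $T_3$ and a degree-$d$ curve $C_d$. Here $|W \cap S| \le |S| \le d$ forces $\sum_i \deg(W \cap E_i) \ge 2d$, so some $\deg(W \cap E_{i_0}) \ge \lceil 2d/3 \rceil \ge 4$; as $W \cap E_{i_0}$ is a curvilinear scheme on $L_{X,P_{i_0}}$, Bezout puts $L_{X,P_{i_0}} \subseteq T_3$. Then $W \cap L_{X,P_{i_0}} = C_d \cap L_{X,P_{i_0}}$ has degree $d$ yet lies inside $a_{i_0}P_{i_0}$, whence $a_{i_0} = d$; by $a_1 + a_2 \le 2d-2$ this gives $i_0 = 1$ and $a_2, a_3 \le d-2$. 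Writing $T_3 = L_{X,P_1} \cup Q$ with $Q$ a conic, the same Bezout argument forces $\deg(W \cap E_j) \le 2$ for $j = 2,3$ (otherwise $L_{X,P_j} \subseteq Q$ and $C_d \cap L_{X,P_j} = dP_j \subseteq a_jP_j$ contradicts $a_j \le d-2$), so $\sum_i \deg(W \cap E_i) \le d+4 < 2d$, a contradiction. This closes the lower bound, so $d(\mathcal{C}^\perp) \ge d+1$.

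For the value and the realization I would take $S \in \mathcal{S}(d+1)$ on a line $L$ through a single $P_i$: then $\deg(L \cap (E \cup S)) = 1 + (d+1) = d+2$, which by itself gives $h^1(\mathbb{P}^2, \mathcal{I}_{E \cup S}(d)) > 0$ (a line of intersection degree $\ge d+2$ always does, as in the proof of Lemma \ref{u00.01A}), so $d(\mathcal{C}^\perp) = d+1$. The residual sequence along $L$, with $h^1(\mathbb{P}^2, \mathcal{I}_{\mbox{Res}_L(E)}(d-1)) = 0$ from Lemma \ref{u5.00} (applicable since $a_2 \le d-1$ and $a_3 \le d-2$), shows the jump equals $1$; Lemma \ref{e1}(4) then yields a weight-$(d+1)$ codeword supported exactly on $S$, unique up to a scalar. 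The cardinality $|\mathcal{S}(d+1)| = 3(q^2-2)\binom{q}{d+1}$ follows from a direct count of the $q^2-2$ lines of $\mathcal{S}$ through each $P_i$ and the points of $X$ that they carry in $B'$. Finally, under $d \ge 6$ and $a_1 + a_2 + a_3 \le 3d-6$ one has $\deg(E \cup S) \le 4d-5$ for every minimal support $S$, so Lemma \ref{u00.01} applies again; re-running the analysis with $|S| = d+1$, the conic and high-degree-cubic alternatives are killed by $d \ge 6$ and $a_1 + a_2 \le 2d-2$, while the complete-intersection alternative now requires $\sum_i \deg(W \cap E_i) \ge 2d-1$, exceeding the Bezout bound $d+4$ once $d \ge 6$. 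Hence only the line alternative survives, forcing $S$ onto a single line $L$, necessarily in $\mathcal{S}$ and unique because $|S| \ge 2$; so every minimum-weight codeword arises from exactly one $S \in \mathcal{S}(d+1)$.
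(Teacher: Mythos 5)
Your proposal is correct and follows essentially the same route as the paper: the reduction through Lemma \ref{e1}, the finite-field criterion of Lemma \ref{u00.01}, and a case analysis on the degree of the auxiliary curve $T$ controlled by the tangent-line Lemma \ref{u5.0} and the vanishing Lemma \ref{u5.00}. The only differences are cosmetic: you re-run the case analysis of Theorem \ref{u5} inline to get $d(\mathcal{C}^{\perp})\ge d+1$ (the paper simply invokes Theorem \ref{u5}, since $B'$ is contained in the set $B$ of that theorem), you make the uniqueness-up-to-scalar claim explicit via a residual exact sequence showing the $h^1$ jump equals $1$, and you dispose of the degree-$3d$ complete-intersection alternative by a direct Bezout/Lemma \ref{u5.0} argument where the paper defers to the last part of the proof of Theorem \ref{u5}.
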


\begin{proof}
 
Since $|L_i\cap X(\mathbb {F}_{q^2})|=q+1$
for any $i \in \{1,2,3\}$, we have $|X(\mathbb {F}_{q^2})\cap (L_1\cup L_2\cup
L_3)| =3q$.
Hence $|B'| = q^3-3q+1$. Lemma \ref{u5.00} gives $h^0(X,\mathcal {O}_X(d)(-E))
= \binom{d+2}{2} -a_1-a_2-a_3$. Since $E\subseteq X$ and $|B'|+\deg (E)
> d\cdot \deg (X)$, there is not non-zero element of $H^0(X,\mathcal
{O}_X(d)(-E))$ vanishing at any point
of $B'$. Hence $\mC$ has dimension $k = \binom{d+2}{2} -a_1-a_2-a_3$.
Theorem \ref{u5} implies that $\mathcal {C}^{\bot}$ has minimum distance at
least $d+1$. The set of all the lines
through any $P_i$ has cardinality $q^2+1$. One of these lines is the tangent
line $L_{X,P_i}$
and two of these lines are in $\{ L_1,L_2,L_3\}$. Hence $|\mathcal {S}|
=3(q^2-2)$. Since $|B'\cap L| = q$ for any $L\in \mathcal {S}$, we have
$|\mathcal {S}(d+1)| = 3(q^2-2)\binom{q}{d+1}$. Fix any $S\in \mathcal
{S}(d+1)$.
We have $h^1(\mathbb {P}^2,\mathcal {I}_E(d))=0$ (by Lemma \ref{u5.00}). Parts
(a) and (b) of Lemma \ref{u00.01}
give $h^1(\mathbb {P}^2,\mathcal {I}_{E\cup S}(d)) >0$. Hence Lemma \ref{e1}
tells us that
$S$ is the support of a unique (up to a non-zero scalar) minimum-weight codeword
of $\mathcal{C}^\perp$.
Now assume $d\ge 6$ and
$a_1+a_2+a_3\le 3d-6$. Look at the proof of Theorem \ref{u5}. Fix any
$S\subseteq B'$ such that $|S|=d+1$. Since $a_1+a_2+a_3+|S|\le 4d-5$, we may
apply Lemma
\ref{u00.01}. Let $T$ a curve arising from the Lemma and set $x:= \deg (T)\in
\{1,2,3\}$ and $e_i:= \deg (E_i\cap T)$.
First assume $L_{X,P_i}\nsubseteq T$ for any $i \in \{ 1,2,3\}$.

\begin{itemize}
 \item[(i)] If $x=1$ then we get $\deg (E\cap T)\le 1$
and $\deg (E\cap T)>0$ if and only if $P_i\in T$. Since $S\cap T\ne \emptyset$,
we get
$T\in \mathcal {S}$ and $S\in \mathcal {S}(d+1)$.
\item[(ii)] If $x=2$, then we have $e_1+e_2+e_3 \le 6$.
Since $\deg (T\cap (E\cup S)) \ge 2d+2$, we get $d+7\ge 2d+2$, a contradiction.
\item[(iii)] Now assume
$x=3$. Since $\deg (T\cap (E\cup S)) \ge 3d$ and $e_i\le 3$ for any $i \in \{
1,2,3\}$, we get $d\le 5$, a contradiction.
\end{itemize}
 From now on we assume $L_{X,P_i}\subseteq T$ for some $i \in \{1,2,3\}$ and
write $T = L_{X,P_i}+T'$. In the case
$x=1$ we obviously get a contradiction. In the case $x=2$ we have $\deg ((S\cup
E_j\cup E_h)\cap T') \le |
S|+1$ (for $\{i,j,h\} = \{1,2,3\}$), because $L_i\cap B' = \emptyset$. This is a
contradiction.  Now assume $x=3$ and that no $L_{X,P_j}$, $j \ne i$, is
contained in $T'$. It follows
$e_j+e_h \le 4$ (for $\{i,j,h\} = \{1,2,3\}$). We deduce $a_i+4 +d+1 \ge 3d$, a
contradiction. Now assume
$L_{X,P_j} \subseteq T'$, say $T' = R+L_{X,P_j}$. Conclude by using the last
part of the proof of Theorem \ref{u5} by setting $|S|=d+1$ and $a_1+a_2\le
2d-2$.
\end{proof}

The following two results are the analogous of Theorem \ref{u5} and Theorem
\ref{m1} for the more complicated case of Hermitian $s$-point codes, with $s \ge
2$ arbitrary.

\begin{remark}
 In \cite{m4} G. L. Matthews computed the Weierstrass semigroup of any $s$
collinear points in
$X(\mathbb {F}_{q^2})$. In particular, the dimensions of the vector spaces of
the form $H^0(X,\mathcal {O}_X(t)(-\sum _{i=1}^{s}a_iP_i))$, with $s \ge 1$,
$P_1,...,P_s \in X(\F_{q^2})$ and $t,a_1,...,a_s\ge 0$ arbitrary, are known.
\end{remark}

\begin{lemma}\label{c1}

Fix integers integers $s$ and $d$ such that and $2 \le s \le d -1\le  q-2$.
Choose integers $a_1,...,a_s$ and fix $s$ distinct collinear points $P_1,...,P_s
\in  X(\mathbb {F}_{q^2})$. Denote by $R$ the line containing the $s \ge 2$
points $P_1,...,P_s$ and take $B:=  X(\mathbb {F}_{q^2})\setminus
\{P_1,...,P_s\}$. Set $E:= \sum _{i=1}^{s} a_iP_i$.
Let $S\subseteq B$ be any subset. For any integer $t$ we
have an exact sequence of coherent sheaves:
\begin{equation}\label{eqc1}
0 \to \mathcal {I}_{E'\cup (S\setminus R\cap S)}(t-1) \to \mathcal {I}_{E\cup
S}(t) \to \mathcal {I}_{\{P_1,\dots ,P_s\}\cup (S\cap R)}(t) \to 0
\end{equation}
in which $\{P_1,\dots ,P_s\} \cup (S\cap R)$ is a set of $s+|S\cap R|$ points of
$R$.
For each integer $i\ge 0$ we have
\begin{equation}\label{eqc2}
h^i(\mathbb {P}^2,\mathcal {I}_{E\cup S}(t))
\le h^i(\mathbb {P}^2,\mathcal {I}_{E'\cup (S\setminus S\cap R}(t-1)) +
h^i(R,\mathcal {I}_{\{P_1,\dots ,P_s\}\cup (S\cap
R),R}(t))
\end{equation}
If $t\ge |S\cap R|+s-1$, then $h^1(\mathbb {P}^2,\mathcal {I}_{E\cup S}(t))
\le h^1(\mathbb {P}^2,\mathcal {I}_{E'\cup (S\setminus S\cap R)}(t-1))$.
\end{lemma}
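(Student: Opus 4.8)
The plan is to derive all three assertions from the single residual (Castelnuovo) exact sequence attached to the line $R$; the only genuinely geometric input is a transversality computation that also identifies the scheme $E'$ occurring in the statement.

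First I would pin down $E'$ as the residual scheme $\mbox{Res}_R(E)=\sum_{i=1}^s(a_i-1)P_i$. Since $R$ passes through the $s\ge 2$ distinct $\F_{q^2}$-rational points $P_1,\dots,P_s$ of $X$, Lemma \ref{intrette} forces $R$ to be a secant line, so $R\ne L_{X,P_i}$ for every $i$. Working in local coordinates at $P_i$ in which $X$ has the tangent direction $L_{X,P_i}=\{v=0\}$, the fat point $E_i:=a_iP_i$ has local ideal $(v,u^{a_i})$, while $R$ is cut out by a form $\alpha u+\beta v$ with $\alpha\ne 0$; a direct colon-ideal computation then gives $\mbox{Res}_R(E_i)=(a_i-1)P_i$ and $E_i\cap R=\{P_i\}$ reduced. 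Writing $Z:=E\cup S$, this yields $Z\cap R=\{P_1,\dots,P_s\}\cup(S\cap R)$, a reduced set of $s+|S\cap R|$ points (the $P_i$ being distinct from the points of $S\subseteq B$), together with $\mbox{Res}_R(Z)=E'\cup(S\setminus S\cap R)$. Substituting these identifications into the general residual sequence of Notation \ref{resid} for the degree-one curve $R$ produces exactly the sequence (\ref{eqc1}).

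The inequality (\ref{eqc2}) is then a purely formal consequence of the long cohomology exact sequence of (\ref{eqc1}): for a short exact sequence $0\to\mathcal A\to\mathcal B\to\mathcal C\to 0$ the space $H^i(\mathcal B)$ is an extension of a subspace of $H^i(\mathcal C)$ by a quotient of $H^i(\mathcal A)$, whence $h^i(\mathcal B)\le h^i(\mathcal A)+h^i(\mathcal C)$ for every $i\ge 0$. For the last assertion I would specialize to $i=1$ and show that the boundary term vanishes. Since $R\cong\mathbb{P}^1$ and $Z\cap R$ is a reduced set of $N:=s+|S\cap R|$ points, its ideal sheaf on $R$ is a line bundle and $\mathcal I_{Z\cap R,R}(t)\cong\mathcal O_{\mathbb{P}^1}(t-N)$, whose $H^1$ vanishes exactly when $t-N\ge -1$, i.e. when $t\ge s+|S\cap R|-1$. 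This is precisely the hypothesis, so the rightmost summand in (\ref{eqc2}) is zero and $h^1(\mathbb{P}^2,\mathcal I_{E\cup S}(t))\le h^1(\mathbb{P}^2,\mathcal I_{E'\cup(S\setminus S\cap R)}(t-1))$, as claimed.

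I expect the transversality/residual identification to be the only step requiring care: once Lemma \ref{intrette} guarantees that $R$ is not tangent to $X$ at any $P_i$, the reducedness of $Z\cap R$ and the identity $\mbox{Res}_R(E)=\sum_{i=1}^s(a_i-1)P_i$ both follow, and everything downstream is the mechanical residual sequence together with elementary cohomology on $\mathbb{P}^1$.
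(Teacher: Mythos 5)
Your proposal is correct and takes essentially the same route as the paper's own proof: both apply the residual (Castelnuovo) exact sequence of the degree-one divisor $R$ to $Z:=E\cup S$, use the transversality of $R$ to $X$ at each $P_i$ (guaranteed by Lemma \ref{intrette}, since a tangent line meets $X$ in a single rational point) to identify $\mbox{Res}_R(E\cup S)=E'\cup (S\setminus S\cap R)$ and $(E\cup S)\cap R=\{P_1,\dots ,P_s\}\cup (S\cap R)$, and then conclude via the long cohomology sequence together with the vanishing $h^1(\mathbb{P}^1,\mathcal{O}_{\mathbb{P}^1}(t-s-|S\cap R|))=0$ for $t\ge s+|S\cap R|-1$. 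The only difference is that your explicit local colon-ideal computation spells out the transversality identifications which the paper simply asserts.
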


\begin{proof}
For any closed subscheme $Z\subseteq \mathbb {P}^2$ the zero-dimensional scheme
$\mbox{Res}_L(Z)$ is the closed subscheme of $\mathbb {P}^2$ with $\mathcal
{I}_Z:\mathcal {I}_R$ as its ideal sheaf (see Notations \ref{resid}). For
any finite set $A\subseteq \mathbb {P}^2$ we have $\mbox{Res}_R(S) =S\setminus
S\cap R$. Since $R$ is a degree $1$ divisor of $\mathbb {P}^2$ we have
the residual sequence
\begin{equation}\label{eqc3}
0 \to \mathcal {I}_{\mbox{Res}_R(Z)}(t-1) \to \mathcal {I}_{Z}(t) \to \mathcal
{I}_{Z\cap R,R}(t) \to 0
\end{equation}
Take $Z:= E\cup S$ with $S\subseteq B$. We have $\mbox{Res}_R(S) = S\setminus
S\cap R$. Since $R$ is not tangent to $C$ and $P_i\in R$ for
any $i \in \{ 1,...,s\}$, we have
$\mbox{Res}_R(E) = E'$. Hence $\mbox{Res}_R(E\cup S) = E'\cup (S\setminus S\cap
R)$. Since $P_i\in R$ for any $i$ and $R$ is
transversal to $C$, we have $R\cap E = \{P_1,\dots ,P_s\}$ and hence $R\cap
(E\cup S) = \{P_1,\dots ,P_s\} \cup (S\cap R)$.
Applying (\ref{eqc3}) we get (\ref{eqc1}). The cohomology exact sequence induced
by (\ref{eqc1}) gives (\ref{eqc2}).
Since $R\cong \mathbb {P}^1$ and  $\deg ( \{P_1,\dots ,P_s\} \cup (S\cap R))
=s+|S\cap R|$,
we have $h^1(R,\mathcal {I}_{\{P_1,\dots ,P_s\}\cup (S\cap R),R}(t))=0$
if $t\ge s+|S\cap R|-1$. Hence $h^1(\mathbb {P}^2,\mathcal {I}_{E\cup S}(t))
\le h^1(\mathbb {P}^2,\mathcal {I}_{E'\cup (S\setminus S\cap R),R}(t))$ if $t\ge
s+|S\cap R|-1$.
\end{proof}

\begin{theorem}\label{u0.1}
Fix integers integers $s$ and $d$ such that $2 \le s \le d -1\le  q-2$. Choose
integers $a_1,...,a_s$ such that $0 < a_i \le d+1-i$ for any $i \in \{
1,...,s\}$ and such that $a_1+\cdots +a_s\le 3d-7+s$. Fix $s$ distinct collinear
points $P_1,...,P_s \in 
 X(\mathbb {F}_{q^2})$. Denote by $R$ the line containing the $s \ge 2$ points
$P_1,...,P_s$.
 Take $B:=  X(\mathbb {F}_{q^2})\setminus \{P_1,...,P_s\}$ and set $E:= \sum
_{i=1}^{s} a_iP_i$. 
Then the code $\mathcal{C}=\mathcal{C}(B,d,-E)$ obtained evaluating the vector
space $H^0(X,\Ol_X(d)(-E))$ on the set $B$ has length $n:= q^3+1-s$ and
dimension $k:= \binom{d+2}{2}-\sum_{i=1}^s a_i$. The code $\mathcal{C}^\perp$
has minimum distance $d+2-s$ and its minimum-weight codewords are exactly the
ones whose support, $S$,
 consists of $d+2-s$ points of $B\cap R$. Any $S\subseteq R\cap B$
 with $|S|=d+2-s$ is the support of exactly one (up to multiplication by a
non-zero scalar) minimum-weight codeword.
\end{theorem}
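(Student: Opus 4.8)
The plan is to read off the length and dimension from the earlier lemmas, to exhibit the codewords supported on $R$ through the ``if'' part of Lemma \ref{u00.01}, and then to prove that no other support can occur by assuming $h^1(\mathbb{P}^2,\mathcal{I}_{E\cup S}(d))>0$ for a small set $S$ and ruling out, one after another, every configuration offered by Lemma \ref{u00.01}; the complete-intersection cubic alternative of Lemma \ref{u00.01}(e) will be the main obstacle. For the numerics: since $d\le q-1<q+1=\deg(X)$, Lemma \ref{e1}(1) gives $h^0(X,\mathcal{O}_X(d))=\binom{d+2}{2}$, and the hypotheses $0<a_i\le d+1-i$ are exactly those of Lemma \ref{u5.00}, so $h^1(\mathbb{P}^2,\mathcal{I}_E(d))=0$ and $h^0(X,\mathcal{O}_X(d)(-E))=\binom{d+2}{2}-\sum_i a_i$; the length is $|B|=q^3+1-s$, and $|B|>d\deg(X)-\deg(E)$ holds since $d\deg(X)\le q^2-1$, so Lemma \ref{e1}(2) yields $\dim\mathcal{C}=k$. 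For the upper bound on the dual distance, $R$ passes through $P_1,P_2$ and so is not tangent; by Lemma \ref{intrette} it meets $X$ in $q+1$ rational points, whence $|R\cap B|=q+1-s\ge d+2-s$, and for $S\subseteq R\cap B$ with $|S|=d+2-s$ the line $R$ carries $\deg(R\cap(E\cup S))=d+2$ points of $E\cup S$, giving $h^1(\mathbb{P}^2,\mathcal{I}_{E\cup S}(d))>0$ by Lemma \ref{u00.01}(b) and distance $\le d+2-s$ by Lemma \ref{e1}(3).

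For the lower bound I would suppose $h^1(\mathbb{P}^2,\mathcal{I}_{E\cup S}(d))>0$ with $|S|\le d+1-s$; then $d+2\le z:=\sum_i a_i+|S|\le 4d-6$, and Lemma \ref{u00.01} produces a curve $T$ of degree $\le 3$ with large intersection with $E\cup S$ (for $d=3$ only the line alternative occurs). Three facts drive the casework: the $P_i$ lie on the single line $R$, so $R$ meets $E\cup S$ in only $s+|S\cap R|\le d+1$ points while any other line contains at most one $P_i$; each tangent line $L_{X,P_i}$ contains all of $E_i$ but no point of $B$, and by Lemma \ref{u5.0} a curve whose contact with $E_i$ exceeds $\deg(T)$ must contain $L_{X,P_i}$; and the budget $\sum_i a_i\le 3d-7+s$ together with $a_i\le d+1-i$. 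Peeling off any component of $T$ equal to $R$ or to some $L_{X,P_i}$ by the residual sequences (Notation \ref{resid}, cf. Lemma \ref{c1}), a short computation bounds $\deg(T\cap(E\cup S))$ by $d+1$ in the line case, by $2d$ in the conic case, and by $3d-1$ for the ``$\deg(C_3\cap(E\cup S))\ge 3d+1$'' alternative of Lemma \ref{u00.01}(e); each contradicts the required $d+2$, $2d+2$, $3d+1$.

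The delicate case, and the main obstacle, is the remaining alternative of Lemma \ref{u00.01}(e): a subscheme $W\subseteq E\cup S$ with $\deg W=3d$ which is a complete intersection $T_3\cap C_d$ of a cubic and a degree-$d$ curve. Write $W=W_E\cup W_S$ with $W_E=\sum_i w_iP_i\subseteq E$, $w_i\le a_i$, and $W_S=W\cap S$; since $|W_S|\le|S|\le d+1-s$ one gets $\deg W_E\ge 2d+s-1$. The crucial point is that a tangent line cannot lie in $T_3$ cheaply: if $L_{X,P_i}\subseteq T_3$ then, restricting ideals to $L_{X,P_i}$ (on which the equation of $T_3$ vanishes identically), $W\cap L_{X,P_i}=C_d\cap L_{X,P_i}$ has degree $d$ by B\'ezout, while $W\subseteq E\cup S$ and $S\cap L_{X,P_i}=\emptyset$ force $W\cap L_{X,P_i}=w_iP_i$ of degree $w_i\le a_i\le d+1-i$; hence $w_i=d$, $a_i=d$ and $i=1$. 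Thus $T_3$ contains at most the single tangent line $L_{X,P_1}$, and when it does I would peel it off: $W'':=\mathrm{Res}_{L_{X,P_1}}(W)=T_3''\cap C_d$ is a complete intersection of type $(2,d)$ and degree $2d$ inside $\bigl(\sum_{j\ge 2}a_jP_j\bigr)\cup S$; the same restriction argument shows $T_3''$ contains no tangent line, so all surviving multiplicities are $\le 2$, and counting degrees against $|S|\le d+1-s$ and $s\le d-1$ is contradictory. The cases where $R$ is a component of $T_3$ or of $T_3''$ close by comparing the identity $\deg(W\cap R)=d$ (valid whenever $R$ lies in the ambient curve) with $\deg(W\cap R)\le s+|W_S\cap R|$, using $s\le d-1$ and, at the boundary, $a_i\le d+1-i$.

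Finally the characterization: for $|S|=d+2-s$ with $h^1(\mathbb{P}^2,\mathcal{I}_{E\cup S}(d))>0$ one has $z\le 4d-5$, so the very same analysis applies; every alternative except ``$T=R$'' is excluded exactly as above, the numerical margins now being tight but broken by $a_i\le d+1-i$, while $T=R$ forces $s+|S\cap R|\ge d+2$, i.e. $S\subseteq R$. Since the distance is $d+2-s$, every proper $S'\subsetneq S$ satisfies $h^1(\mathbb{P}^2,\mathcal{I}_{E\cup S'}(d))=0$, so by Lemma \ref{e1}(4) and the Remark following it each such $S$ supports a minimum-weight codeword, unique up to a nonzero scalar. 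This identifies the minimum-weight codewords with the subsets of $d+2-s$ points of $R\cap B$ and completes the proof.
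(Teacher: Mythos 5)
Your proposal is correct, and its architecture is the same as the paper's: parameters from Lemma \ref{u5.00} and Lemma \ref{e1}, the ``if'' direction by applying part (b) of Lemma \ref{u00.01} to the line $R$, and the ``only if'' direction by applying Lemma \ref{u00.01} to $E\cup S$ and eliminating every candidate curve $T$ other than $R$ via Lemma \ref{u5.0}, the fact that tangent lines miss $B$ (Lemma \ref{intrette}), the collinearity of the $P_i$ (any line other than $R$ contains at most one of them), and residuation as in Lemma \ref{c1}. The only genuine divergence is your handling of the complete-intersection alternative of Lemma \ref{u00.01}(d)--(e). The paper never uses the complete-intersection structure in this proof: it retains only the consequence $\deg(T\cap(E\cup S))\ge 3d$ and disposes of it by the same degree count that kills the ``$\deg(C_3\cap(E\cup S))\ge 3d+1$'' alternative, since the hypotheses $a_i\le d+1-i$ (hence $a_i+a_j\le 2d-1$ for $i\ne j$) and $|S|\le d+2-s$ cap every cubic configuration at $3d-1$. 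Your liaison argument --- a line component $L$ of $T_3$ forces $W\cap L=C_d\cap L$ of degree exactly $d$, so a tangent component is possible only with $i=1$ and $a_1=d$, and peeling it off leaves a type $(2,d)$ complete intersection of degree $2d$ that cannot fit inside the residual scheme --- is also correct, and it makes transparent why the tightest boundary case ($s=d-1$ with every $a_i\ge 3$) cannot occur, namely because $a_s\le d+1-s=2$; the paper reaches the same conclusion purely by counting. So your separate treatment of the complete-intersection case is logically redundant (the uniform counting already covers it, since a degree-$3d$ subscheme $W\subseteq T_3\cap(E\cup S)$ forces the same $\ge 3d$ threshold), but it is sound, and it is the more robust argument exactly where the numerical margins vanish.
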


\begin{proof}
The parameters $n$ and $k$ are obvious by Lemma \ref{u5.00} and the inequality
$\deg (E)+|B|
> d\cdot \deg (X)$. Lemma \ref{u5.00} gives also $h^1(\mathbb {P}^2,\mathcal
{I}_E(d)) = 0$. By Lemma \ref{e1} it is enough to prove that $h^1(\mathbb
{P}^2,\mathcal {I}_{E\cup A}(d)) =0$ for any $A\subseteq B$ such that $|A|\le
d+1-s$ and that for any
$S\subseteq B$ such that $|S| =d+2-s$ we have
$h^1(\mathbb {P}^2,\mathcal {I}_{E\cup S}(d)) >0$ if and only if $S\subseteq
R$. 
If $S\subseteq B\cap R$ and $|S| =d+2-s$ then $\deg ((E\cup S)\cap R) =d+2$.
Hence the part (b) of Lemma \ref{u00.01} gives $h^1(\mathbb {P}^2,\mathcal
{I}_{E\cup S}(d))>0$.
So the ``if'' part of the statement is proved.
Now we check the ``only if'' part. Fix
$S\subseteq B$ such that $|S|\le d+2-s$ and $h^1(\mathbb {P}^2,\mathcal
{I}_{E\cup S}(d)) >0$. Since $\deg (E\cup S) \le 4d-5$, we may apply Lemma
\ref{u00.01}.
Let $T$ be a curve arising from the statement of that lemma and set $x:= \deg
(T)$. Define for any $i \in \{ 1,...,s\}$ $E_i:= a_iP_i$ and $e_i := \deg
(E_i\cap T)$. Set $f:= |S\cap T|$. Notice that
if $R$ is not a component of $T$ then $e_i>0$ for at most $x$ indices $i$. Set
$E^{(i)}:= E\setminus E_i$ and $E':= \sum _{i=1}^{s} (a_i-1)P_i$. We will see
$E'$ both as a positive
divisor of $X$ and a zero-dimensional subscheme of $\mathbb {P}^2$. We have
$\deg (E') =
\deg (E)-s$. Consider the scheme $\mbox{Res}_R(E) \subseteq E \subseteq \mathbb
{P}^2$ and observe that since $R$ is transversal to $X$ at each $P_i$, we have
$E' = \mbox{Res}_R(E)$. Keep on mind Lemma \ref{c1} and its proof.

\begin{itemize}
 \item[(i)] In this step we assume that $T$ contains no one of the tangent lines
$L_{X,P_i}$, $1\le i \le s$. Hence $e_i \le x$ for any $i$ (Lemma \ref{u5.0}).
Hence $\deg ((E\cup S)\cap T) \le sx+f$.
First assume $x=1$. We get $|\{P_1,\dots ,P_s\}\cap T|+f \ge d+2$ and hence
$S\subseteq T$,
$P_i\in T$ for any $i$ (i.e. $T=R$)
and $f=d+2-s$, as claimed. Now assume $x=2$ and that $R$
is not a component of $T$. We get $2\cdot 2 +(d+2-s) \ge 2d+2$, a contradiction.
Now
assume that $x=2$ and that $R$ is a component of $T$, say $T =R\cup L'$. If
$|S\cap R| \ge d-s+1$ then $S\subseteq R\cap B$, as claimed; if $S\cap R)\le
d-s$ then $\deg (L'\cap (E'\cup (S\setminus S\cap R)) \ge
d+2+s \ge d+2$. Since $L'\ne L_{X,P_i}$ for any $i$ and $L'\ne R$, we have $\deg
(E'\cap L') \le 1$. Hence $|S| \ge
|S\cap L'|
\ge d+1$, contradiction. If $x=3$ and $R$
is not a component of $T$ then we get $3\cdot 3+d+2-s \le 3d-1$, a
contradiction. Now
assume $x=3$ and $T =R\cup T'$. We may assume
$f \le d+1-s$. Hence $\deg (T'\cap (E'\cup (S\setminus S\cap R)) \ge 3d-(d+1-s)
=2d-1+s$. Since $2d-1+s \ge 2d+2$, we get a contradiction as in the case $x=2$.

\item[(ii)] Now assume that $T$ contains one of the tangent lines $L_{X,P_i}$.
If $x=1$, then $L_{X,P_i} = T$ and hence $\deg (T\cap (E\cup S )) = e_i \le d$,
contradiction. Now assume
$x\ge 2$ and write $T = T'\cup L_{X,P_i}$. We have $T\cap (E\cup S)
=  E_i\sqcup T'\cap (S\cup E^{(i)})$. Assume for the moment that $T'$
contains no tangent line to $X$ at one of the points $P_j$, $j\ne i$. We get
$e_j \le x-1$ for all $j\ne i$ (Lemma \ref{u5.0}). Hence $\deg (T\cap (E\cup S))
=
e_i + \deg (T'\cap (S\cup E^{(i)})) \le e_i+f +(x-1)(s-1)$. First assume $x=2$.
Since $e_i\le d$, we get $f+(s-1) \ge d+2$, a contradiction. Now assume $x=3$,
i.e. $\deg (T')=2$.
First assume $R\subseteq T'$, say $T' = R\cup L''$ with $L''$ a line.
If $|S\cap R| \ge d+2-s$, then we are done. Hence
we may assume $|S\cap R|\le d+1-s$. Hence $\deg (L''\cap( (S\setminus S\cap
R)\cup E)) \ge 3d-e_i -(d+1-s)
\ge d+2$. Since $L''\ne R$ and $L'' \ne L_{X,P_j}$ for any $j$, we have $\deg
(E\cap L'')\le 1$.
Hence $|S| \ge d+1$, a contradiction. Now assume $R\nsubseteq T'$. Since
the points $P_1,\dots ,P_s$ are collinear, Bezout theorem gives
$|T'\cap \{P_1,\dots ,P_s\}| \le 2$. So Lemma \ref{u5.0} implies $\deg
(E^{(i)}\cap T') \le 4$.
We conclude $f \ge 3d-1 -a_i -4 \ge 2d-5 > d+2-s$, a contradiction.
Now assume the existence of an index $j\ne i$ such that $T'$ contains the
tangent line
$L_{X,P_j}$. We have $L_{X,P_j}\cap B=\emptyset$. Hence $f=0$ if $x=2$, a
contradiction.
Aassume $x=3$ and write $T' = L_{X,P_j}\cup L$ with $L$ a line and $|S\cap
L|=f$.
If $L$ is a tangent line to $X$, say at $P$, then either $\deg ((E\cup S) \cap
T)
=e_i+e_j+e_h$ (i.e. $P = P_h$ with $h\in \{1,\dots ,s\}\setminus \{i,j\}
$, or $\deg ((E\cup S)\cap T) \le e_i+e_j+1$
(i.e. $P\notin \{P_1,\dots ,P_s\}$). In any case we get a contradiction, because
$e_i+e_j+e_h \le 3d-1$
when $i, j, h$ are distinct. If $L\ne L_{X,P_h}$ for any $h\notin \{i,j\}$, then
$\deg (((E\setminus (E_i\cup E_j))\cup S)\cap L)
\le f+1$ and equality holds if and only if $P_h\in L$ for some $h\notin
\{i,j\}$. Since $f\le d+1-s$, we
get 
$e_i+e_j+d+2-s \ge 3d$, a contradiction. 
\end{itemize}
\end{proof}

As in the case of 3-point codes, in the following Theorem \ref{m3} we  construct
$s$-point codes with improved parameters. 

\begin{theorem}\label{m3}
Fix integers integers $s$ and $d$ such that $2 \le s \le d -1\le  q-2$. Choose
integers $a_1,...,a_s$ such that $0 < a_i \le d+1-i$ for any $i \in \{
1,...,s\}$, $a_1+\cdots +a_s\le 3d-6$ and $a_i+a_j \le 2d-2$ for any $i\ne j$.
Fix $s$ distinct collinear points $P_1,...,P_s \in 
  X(\mathbb {F}_{q^2})$. Denote by $R$ the line containing the $s \ge 2$ points
$P_1,...,P_s$.
 Take $B':=  X(\mathbb {F}_{q^2})\setminus
R\cap X(\mathbb {F}_{q^2})$ and set $E:= \sum _{i=1}^{s} a_iP_i$. 
Then the code $\mathcal{C}=\mathcal{C}(B',d,-E)$ obtained evaluating the vector
space $H^0(X,\Ol_X(d)(-E))$ on the set $B'$ has length $n:= q^3-q$ and dimension
$k:=\binom{d+2}{2}-\sum_{i=1}^s a_i$. Let $\mathcal{S}$ denote the set of the
lines in $\Pro^2$ different from $R$, containing
one of the points $P_1,\dots ,P_s$ and not tangent to $X$. Let $\mathcal
{S}(d+1)$ be the set of all the subsets
$S\subseteq B'$ such that $|S|=d+1$ and $S\subseteq L$ for some $L\in \mathcal
{S}$. We have $|\mathcal
{S}(d+1)| = s(q^2-1)\binom{q}{d+1}$. The code $\mathcal {C}^{\bot}$ has minimum
distance $d+1$ and for any set $S\in \mathcal {S}(d+1)$
there exists a unique (up to multiplication by a non-zero scalar) minimum-weight
codeword with $S$ as its support. Moreover,
all the minimum-weight codewords of $\mathcal {C}^{\bot}$ are associated to a
unique $S\in \mathcal {S}(d+1)$.
\end{theorem}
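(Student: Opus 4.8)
The plan is to follow the template already established by the proofs of Theorem \ref{u5}, Theorem \ref{m1}, and Theorem \ref{u0.1}, since Theorem \ref{m3} is the $s$-point analogue of the improvement carried out in Theorem \ref{m1}. First I would dispose of the easy bookkeeping: the length is $n=|B'|=q^3+1-(s+1)q\cdots$ — more precisely $|R\cap X(\F_{q^2})|=q+1$ by Lemma \ref{intrette}, so $|B'|=q^3+1-(q+1)=q^3-q$; and the dimension is $k=\binom{d+2}{2}-\sum a_i$, which follows from $h^0(X,\Ol_X(d))=\binom{d+2}{2}$ (as $d\le q-1<\deg X$), the vanishing $h^1(\Pro^2,\mathcal{I}_E(d))=0$ from Lemma \ref{u5.00} (whose hypotheses $a_i\le d+1-i\le d+2-i$ and $a_i\le q+1$ are met), and the fact that $\deg(E)+|B'|>d\cdot\deg(X)$, which kills any form of $H^0(X,\Ol_X(d)(-E))$ vanishing on all of $B'$.

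Next I would establish that the minimum distance is at least $d+1$. This is exactly where the shrinking of the evaluation set from $B$ to $B'$ pays off: Theorem \ref{u0.1} already gives that $\mathcal{C}(B,d,-E)^\perp$ has minimum distance $d+2-s$, realized only by supports of size $d+2-s$ lying on $R\cap B$. By removing all of $R\cap X(\F_{q^2})$ we delete every such support, so no low-weight codeword of the larger dual survives; I would phrase this via Lemma \ref{e1}, arguing that for $A\subseteq B'$ with $|A|\le d$ one has $h^1(\Pro^2,\mathcal{I}_{E\cup A}(d))=h^1(\Pro^2,\mathcal{I}_E(d))=0$. For the count, each $P_i$ lies on $q^2+1$ lines of $\Pro^2$; discarding $R$, the tangent $L_{X,P_i}$, and noting these are distinct gives $q^2-1$ lines per point, whence $|\mathcal{S}|=s(q^2-1)$; since $|L\cap B'|=q$ for $L\in\mathcal{S}$ (the point $L\cap R\in R$ is excluded, and $L$ is not tangent so meets $X$ in $q+1$ rational points by Lemma \ref{intrette}), we get $|\mathcal{S}(d+1)|=s(q^2-1)\binom{q}{d+1}$.

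Then, for the existence and uniqueness of a weight-$(d+1)$ codeword supported on each $S\in\mathcal{S}(d+1)$: with $S\subseteq L$, $L\in\mathcal{S}$ through some $P_i$, one has $\deg((E\cup S)\cap L)\ge a_i+(d+1)\ge d+2$ and $\deg(E\cup S)\le 3d-6+(d+1)=4d-5$, so Lemma \ref{u00.01}(b) forces $h^1(\Pro^2,\mathcal{I}_{E\cup S}(d))>0$ while $h^1(\Pro^2,\mathcal{I}_E(d))=0$; Lemma \ref{e1}, part (4), combined with minimality $|S|=d+1$, then yields a unique minimum-weight codeword with support exactly $S$. The genuinely hard part is the converse: showing that \emph{every} minimum-weight codeword arises this way. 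I would fix $S\subseteq B'$ with $|S|=d+1$ and $h^1(\Pro^2,\mathcal{I}_{E\cup S}(d))>0$, apply Lemma \ref{u00.01} (legitimate since $\deg(E\cup S)\le 4d-5$) to extract a curve $T$ of degree $x\in\{1,2,3\}$, and run the same case analysis as in Theorem \ref{m1} and step (ii) of Theorem \ref{u0.1}, setting $e_i:=\deg(E_i\cap T)$ and $f:=|S\cap T|$. The $x=1$ case directly gives $S\subseteq L\in\mathcal{S}$; the $x=2$ and $x=3$ cases without an absorbed tangent line produce the numerical contradictions $d+7\ge 2d+2$ and $9+(d+1)\ge 3d$ (using $d\ge 5$). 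When $T$ contains a tangent $L_{X,P_i}$, I would residuate using $L_{X,P_i}\cap B'=\emptyset$ and $\mathrm{Res}_{L_{X,P_i}}$, reducing to a lower-degree curve; the delicate subcase is $x=3$ with $T=L_{X,P_i}\cup L_{X,P_j}\cup L''$, where the residual line $L''$ must carry $\deg(L''\cap(E'\cup S))\ge d+2$, and here the hypotheses $a_i+a_j\le 2d-2$ and $a_1+\cdots+a_s\le 3d-6$ are precisely what rule out the stray complete-intersection configuration — mirroring the final paragraph of the proof of Theorem \ref{u5}. I expect this last tangent-absorption subcase, together with correctly handling whether $R$ itself is a component of $T$ via Lemma \ref{c1}, to be the main obstacle, since it is where the collinearity of the $P_i$ and the strengthened degree bounds must be used simultaneously.
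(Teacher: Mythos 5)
Your overall architecture (parameters, the count $|\mathcal{S}(d+1)|=s(q^2-1)\binom{q}{d+1}$, existence and uniqueness of codewords supported on each $S\in\mathcal{S}(d+1)$ via Lemma~\ref{e1} and the ``if'' part of Lemma~\ref{u00.01}, then a case analysis on a low-degree curve $T$ for the converse) is the paper's architecture. But there is a genuine gap in how you establish that the minimum distance is at least $d+1$. You derive it from Theorem~\ref{u0.1}: since the weight-$(d+2-s)$ codewords of $\mathcal{C}(B,d,-E)^{\bot}$ are exactly those supported on $R\cap B$, removing $R\cap X(\F_{q^2})$ ``deletes every such support''. This only rules out weights up to $d+2-s$. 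Indeed, a codeword of $\mathcal{C}(B',d,-E)^{\bot}$ extends by zero to a codeword of $\mathcal{C}(B,d,-E)^{\bot}$ of the same weight supported off $R$ (dual of a puncturing is a shortening of the dual); Theorem~\ref{u0.1} forbids such a codeword having weight $\le d+2-s$, but it says nothing about subsets $A\subseteq B'$ with $d+3-s\le |A|\le d$ and $h^1(\Pro^2,\mathcal{I}_{E\cup A}(d))>0$, because Theorem~\ref{u0.1} characterizes only the \emph{minimum-weight} supports, not all larger sets imposing dependent conditions. So your second paragraph proves minimum distance $\ge d+3-s$, which equals $d+1$ only when $s=2$; for $s\ge 3$ the weights in the range $[d+3-s,d]$ are not excluded, and since your converse analysis is explicitly restricted to $|S|=d+1$, nothing in the proposal closes this range. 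The paper avoids this entirely: it runs the curve-$T$ case analysis for \emph{all} $S\subseteq B'$ with $|S|\le d+1$ and $h^1(\Pro^2,\mathcal{I}_{E\cup S}(d))>0$, and every non-contradictory branch forces $|S|=d+1$ with $S$ on a line of $\mathcal{S}$, yielding the distance bound and the support characterization simultaneously. The fix is exactly that: drop the reduction to Theorem~\ref{u0.1} and run your case analysis for all $|S|\le d+1$.

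A smaller but real slip: for $S\subseteq L$, $L\in\mathcal{S}$ through $P_i$, you claim $\deg((E\cup S)\cap L)\ge a_i+(d+1)$. This is false: the scheme $E_i=a_iP_i$ lies along the tangent line $L_{X,P_i}$ (this is the content of Lemma~\ref{u5.0}), and the transversal line $L\ne L_{X,P_i}$ meets it in a scheme of degree $1$, so $\deg((E\cup S)\cap L)=d+2$. That still meets the threshold in Lemma~\ref{u00.01}, so existence survives; but the same misreading of the schemes $E_i$ would be fatal in the converse analysis, where the whole point is to bound $e_i=\deg(E_i\cap T)$ by $\deg(T)$ via Lemma~\ref{u5.0} (when no tangent line is a component of $T$) and to use Bezout together with the collinearity of $P_1,\dots,P_s$ to see that at most $\deg(T)$ of these points lie on $T$ when $R\nsubseteq T$. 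Relatedly, your quoted contradiction ``$d+7\ge 2d+2$'' is the three-point count from Theorem~\ref{m1}; for arbitrary $s$ one needs the bound $\deg(T\cap E)\le \deg(T)^2$ coming from that Bezout/collinearity argument, as in the paper's step (i).
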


\begin{proof}
The parameters $n, k$ of $\mathcal {C}$ are obvious,
because $h^1(\mathbb {P}^2,\mathcal {I}_E(d)) =0$ by Lemma \ref{u5.00} and the
inequality $\deg (E)+|B'|
> d\cdot \deg (X)$ holds. If $A\in \mathcal {S}(d+1)$, then the ``~if~'' part of
Lemma \ref{u00.01} gives
$h^1(\mathbb {P}^2,\mathcal {I}_{E\cup A}(d)) >0$. At this point it is enough to
prove that if $S\subseteq B'$ satisfies $|S| \le d+1$ and $h^1(\mathcal
{I}_{E\cup S}(d)) >0$ then $S\in \mathcal {S}(d+1)$. 
We will make use of the notations $E_i, E^{(i)}, E'$, and so on, introduced in
the proof of Theorem \ref{u0.1}. Keep on mind Lemma \ref{c1} and its proof.
Since $\deg (E\cup S)
=a_1+\cdots +a_s +|S|\le 4d-5$, we may apply Lemma \ref{u00.01} and get a curve
$T\subseteq \mathbb {P}^2$ arising from the statement of that lemma. Set $x:=
\deg (T)\in \{1,2,3\}$. Define $e_i:= \deg (T\cap E_i)$ and $f:= |T\cap S|$.
Let $L\subseteq \mathbb {P}^2$ be any line. We have $\deg (L\cap (E\cup S)) =s$
if $L=R$; $\deg (L\cap (E\cup S)) = a_i$
if $L=L_{X,P_i}$; $\deg (L\cap E) = 1$ if $L\ne R$, $L$ is not tangent to $X$
and $L\cap \{P_1,\dots ,P_s\}\ne
\emptyset$; $L\cap E=\emptyset$ if $L\cap \{P_1,\dots ,P_s\}=\emptyset$.

\begin{itemize}
 \item[(i)] Here we assume that $R$ is not an irreducible component of $T$.
First assume that no $L_{X,P_i}$ is a component of
$T$. Hence
$e_i \le x$ for all $i$ (Lemma \ref{u5.0}). If $x=1$ we get $S\subseteq T$,
$|S|=d+1$ and $L\cap \{P_1,\dots ,P_s\}\ne
\emptyset$ (i.e. $T\in \mathcal {S}$ and $S\in \mathcal {S}(d+1)$), because
$s=\deg ((E\cup S)\cap R) =s <d+2$. Now assume $x=2$ (resp. $x=3$). Since the
points $P_1,\dots ,P_s$ are collinear
and $R\nsubseteq T$, Bezout theorem gives $|\{P_1,\dots ,P_s\}\cap T|\le x$.
Hence
$2\cdot 2 +|S|\ge 2d+2$ (resp. $3\cdot 3+|S| \ge 3d$), a contradiction. Now
assume, say, $L_{X,P_i}\subseteq T$
and set $T = L_{X,P_i}\cup T'$ with $S\cap T = S\cap T'$ and $\deg (T') =x-1$.
If $x=1$ then $\deg ((E\cup S)\cap T) =a_i<d+2$, a contradiction. If $x=2$ then
$a_i + (\deg (E^{(i)}\cup
S)\cap T')
\ge 2d+2 -a_1 \ge d+2$. Hence $S\subseteq T'$, $T'\in \mathcal {S}$ and $S\in
\mathcal {S}(d+1)$. Now assume
$x=3$ and that $T'$ does not contain a line $L_{X,P_j}$ for any $j\ne i$.
We get $\deg (T'\cap E^{(i)})\le \deg (T')^2 =4$. Since $|S| \le d+1$ we
conclude $\deg
(T\cap (E\cup S))
\le a_i+4+d+1 <
3d$, a contradiction. Now assume $L_{X,P_j}\subseteq T$ for some $j\ne i$. We
still have $a_i+a_j+d+1<3d$ (and hence a contradiction),
because
we assumed $a_i+a_j\le 2d-2$ for all $i\ne j$.

\item[(ii)] Now assume that $R$ is an irreducible component
with multiplicity $c\ge 1$ of $T$ and write $T = cR+T_1$ with $\deg (T_1)=x-c$.
Since $R\cap B'=\emptyset$, we have $f = |S\cap T_1|$. Hence
$x>c$. Assume $x=2$, so that $c=1$. We get $\deg (T_1\cap (S\cup E)) \ge 2d+2-s
\ge d+3$, a contradiction.
Now assume $x=3$ and $c=2$. We get $\deg (T_1\cap (E\cup S)) \ge 3d-2s\ge d+2$.
Hence $T_1\in \mathcal {S}$
and $S\in \mathcal {S}(d+1)$. Assume $x=3$ and $c=1$. If $T_1$ contains no
tangent line $L_{X,P_i}$, then
$\deg (T_1\cap E)) \le 2\cdot 2$. Hence $3d \le \deg (T\cap (E\cup S))\le s+
\deg (T_1\cap (E\cup S)) \le s+4+d+1$,
a contradiction. Assume $L_{X,P_i}\subseteq T_1$, say $T_1=L_{X,P_i}\cup T_2$.
Set
$E''[i]:= \sum _{j\ne i} (a_j-1)P_j$. We have $E''[i] = \mbox{Res}_R(E^{(i)})$
and $3d \le \deg (T\cap (E\cup S)) = (a_i-1) +s +\deg (T_2 \cap (S\cup
E''[i]))\le s+a_i-1
+d+2$, a contradiction.
\end{itemize}
\end{proof}

\section{Conclusion}
We introduce classical geometric tools in
the study of geometric codes. The most common methods to investigate
Goppa codes are Weierstrass semigroups, Gr\oe bner bases and other more
computational tools (see the References). Many
important results can be established through these techniques. In this paper we
show how a more geometric approach can reveal an interesting
 bond between the coding-theoretic properties of a Goppa code and the true
geometric attributes of the underlying curve, such as the
geometry of its tangent lines.

\end{document}